\numberwithin{equation}{section}
\def\cyr{%
\renewcommand\rmdefault{wncyr}%
\renewcommand\sfdefault{wncyss}%
\renewcommand\encodingdefault{OT2}%
\normalfont
\selectfont}
\DeclareMathAlphabet{\zap}{OT1}{pzc}{m}{it}
\DeclareTextFontCommand{\textcyr}{\cyr}
\def\be{\begin{equation}}
\def\ee{\end{equation}}
\def\bea{\begin{eqnarray*}}
\def\eea{\end{eqnarray*}}
\newcommand{\rad}{\text{\cyr   ya}}
\newcommand{\eg}{\text{\cyr   \bf G}}
\def\CC{\mathbb{C}}
\DeclareMathOperator{\Vol}{Vol}
\newtheorem{thm}{Theorem}[section]
\newtheorem{prop}[thm]{Proposition}
\newtheorem{cor}[thm]{Corollary}
\newtheorem{defn}[thm]{Definition}
\newenvironment{rmk}{\mbox{ }\\{\bf  Remark}\mbox{ }}{
\hfill  $\diamondsuit$\mbox{}\bigskip}
\def\ZZ{{\mathbb{Z}}}
\def\RR{{\mathbb{R}}}
\def\CP{{\mathbb{C} \mathbb{P}}}
\begin{document}

\title{Mass,  K\"ahler Manifolds, and\\ Symplectic Geometry}

\author{Claude LeBrun\thanks{Research funded in part by  the Simons Foundation and the \'Ecole Normale Sup\'erieure.}}

\date{}
\maketitle

\begin{abstract}
In the author's previous joint work  with Hans-Joachim Hein \cite{heinleb},  a mass formula 
for asymptotically locally Euclidean (ALE) K\"ahler manifolds was proved, assuming only 
relatively weak   fall-off conditions on the metric. However, the  
case of real dimension $4$ presented  technical difficulties that led us to require fall-off conditions
in this special dimension that are  stronger than the   Chru\'{s}ciel
fall-off conditions that sufficed  in higher dimensions. The present article, however,  
shows that techniques of $4$-dimensional symplectic geometry can be used to 
obtain   all the major results of \cite{heinleb}, assuming only Chru\'{s}ciel-type fall-off.
In particular, the present  article presents  a new a  proof of our Penrose-type inequality for the mass of
an asymptotically Euclidean K\"ahler manifold that only requires  Chru\'{s}ciel metric fall-off. 
 \end{abstract}

A complete connected non-compact  
Riemannian manifold $(M,g)$ of real dimension $n\geq 3$  is said to be {\em asymptotically Euclidean} (or {\em AE}\,) if there is 
a compact subset $\mathbf{K}\subset M$ such that $M-\mathbf{K}$ consists of finitely many components, each of which 
is diffeomorphic to the complement of a closed ball $\mathbf{D}^n \subset \RR^n$  in  such a manner  
that $g$ becomes the standard Euclidean metric plus terms that fall off sufficiently rapidly at infinity. 
More generally, a Riemannian $n$-manifold $(M,g)$ is said to be {\em asymptotically locally Euclidean} (or {\em ALE}\,) 
if the complement of a compact set $\mathbf{K}$ consists of finitely many components, each of which 
is diffeomorphic to  a quotient $(\RR^n-\mathbf{D}^n)/\varGamma_i$, where    $\varGamma_i\subset {\mathbf O}(n)$ is a finite subgroup 
that acts freely on the unit sphere,
in such a way that $g$ again becomes the Euclidean metric plus error terms that fall off sufficiently rapidly at infinity.
The components of $M-\mathbf{K}$ are called the {\em ends} of $M$;  their fundamental groups are 
the aforementioned groups $\varGamma_i$, and might in principle  be different
for different ends of the manifold. 

There is no clear consensus regarding the  exact fall-off conditions that should be imposed on the metric $g$,
as various authors have in practice tweaked the definition to dovetail  with the technical requirements  demanded by their favorite techniques. However, the weakest standard 
hypotheses that seem to lead to compelling results are the ones  introduced by Chru\'{s}ciel \cite{admchrus}:
\label{conditions}
 \begin{enumerate}[(i)]
 \item
 \label{eins} The metric  $g$ is of class $C^2$, with scalar curvature $s$ in  $L^1$;  and
\item
\label{zwei}
 in  some asymptotic chart at each end of $M^n$, and for some $\varepsilon > 0$, the components of the metric 
satisfy $$
g_{jk} = \delta_{jk} + O (|x|^{1-\frac{n}{2}-\varepsilon}), \qquad g_{jk,\ell} = 
O (|x|^{-\frac{n}{2}-\varepsilon}). 
$$ 
 \end{enumerate}
 With these very weak hypotheses, Chru\'{s}ciel's argument shows that the {\em mass} 
 $$
{\zap m}(M, g) := \lim_{\varrho\to \infty}  
 \frac{\mathbf{\eg} (\frac{n}{2})}{4(n-1)\pi^{n/2}} \int_{S_\varrho/\varGamma_i} \left[ g_{k\ell, k} -g_{kk,\ell}\right] \mathbf{n}^\ell d\mathfrak{a}_E
$$
of   an ALE manifold $(M,g)$ at any given end 
is both well-defined and invariant under a large class of changes of asymptotic coordinate system;
here, commas indicate  partial derivatives in the given asymptotic coordinates, 
summation over
repeated indices is understood, 
$S_\varrho$ is the Euclidean coordinate sphere of radius $\varrho$,  $\varGamma_i$ is the fundamental group of the relevant end, 
$\mathbf{\eg}$ is the Euler Gamma function, 
$d\mathfrak{a}_E$ is the $(n-1)$-dimensional volume form induced on this sphere by the Euclidean 
metric,  and $\vec{\mathbf{n}}$ is the outward-pointing Euclidean unit normal vector.
While  Chru\'{s}ciel's   paper actually only discusses the AE case,  his argument immediately extends to the more general ALE 
setting under discussion  here. 
The fact that fall-off conditions on the metric are by no means a matter of widespread consensus is nicely illustrated by 
 Bartnik's  powerful and better-known  theorem   \cite{bartnik} on the coordinate-invariance of the mass, which was  proved  around the same time
as Chru\'{s}ciel's work;  while  Bartnik's   
conclusion regarding the  coordinate invariance of the mass is markedly stronger than Chru\'{s}ciel's, it  is obtained at the price of replacing  hypothesis \eqref{zwei}  above with the 
stronger
assumption that the $g_{jk}-\delta_{jk}$ belong to a weighted Sobolev spaces $W^{2,q}_{-\tau}$ for some $q > n$ and some $\tau > (n-2)/2$.
Bartnik's paper is also notable for showing,  by counter-example, that any significant weakening of Chru\'{s}ciel's conditions \eqref{eins} and \eqref{zwei} would
result in the mass being ill-defined and/or  coordinate dependent.

In joint work with H.-J. Hein, the present author has  elsewhere shown \cite{heinleb} that if $(M, g, J)$ is an ALE {\em K\"ahler} manifold of complex dimension $m$, 
then $M$ has only one end, and that the  mass  at this unique end  is  given by 
$${\zap m}(M,g) =  {\textstyle \frac{1}{(2m-1)\pi^{m-1}}} \langle \clubsuit (-c_1) , [\omega ]^{m-1}\rangle+
{\textstyle \frac{(m-1)!}{4(2m-1)\pi^m}} \int_M s_g d\mu_g$$
where $s_g$ and $d\mu_g$ are respectively  the scalar curvature and volume form of $g$,  $c_1 =c_1^{\RR}(M,J)\in H^2 (M)$ is the first Chern class of the complex structure, 
$[\omega]\in H^2(M)$ is the
K\"ahler class of $g$, $\clubsuit : H^2(M) \to H^2_c (M)$ is the inverse of the natural morphism 
 from compactly supported to ordinary deRham cohomology, and 
 $\langle~, ~\rangle$ is the duality pairing between $H^2_c(M)$ and 
$H^{2m-2}(M)$. 
If one  accepts it as  {\em given} that $M$ has only one end, our proof \cite[\S 3]{heinleb}  of the above mass formula 
only requires the Chru\'{s}ciel fall-off hypotheses \eqref{eins} and \eqref{zwei},
and provides an entirely self-contained  proof of the  coordinate-invariance of the mass in the K\"ahler case. 
However,  our proof that $M$   {\em can} only  have one end,   merely assuming  the metric fall-off condition  \eqref{zwei}, works well only when 
 $m\geq 3$; when $m=2$,  our  proof only managed to  obtain the same conclusion from Chru\'{s}ciel's  fall-off hypothesis if 
 $\varepsilon > 1/2$. This and related phenomena led  us, in \cite{heinleb},  to instead  insist on  Bartnik-type metric fall-off in the special case real of dimension $4$. 
 
This note will provide a remedy for this  state of affairs. Many of the analytic subtleties encountered 
in the $4$-dimensional are subtly intertwined with the fact that the complex structure of an ALE K\"ahler surface need not be standard at infinity. 
By contrast, we will show here that  the {\em symplectic structure}  at infinity of such a manifold {\em is}
always standard, even with  extremely weak  fall-off assumptions on   the metric. 
By  developing  symplectic versions  of some of the previous  proofs,  
we will thus be able  to show that, even when $m=2$,    all the the main results of 
\cite{heinleb}   continue to  hold even when the metric simply satisfies  Chru\'{s}ciel's
 weak  fall-off hypotheses \eqref{eins} and \eqref{zwei}. 
 In particular, we will see that our Penrose-type inequality \cite[Theorem E] {heinleb} for the mass of an AE K\"ahler manifold
remains valid  even in real dimension four, assuming only the  mildest reasonable  fall-off assumptions on the metric.

 \section{The Asymptotic Symplectic Structure}
 \label{logos}
 
 For  clarity and concreteness, we will restrict the following discussion to real dimension $4$. However,  most of what follows 
does  work, {\em mutatis mutandis}, in 
   higher dimensions, and indeed  is actually far  less delicate in that setting. 
 
 Let $(M^4, g, J)$ be a an ALE K\"ahler surface, which we hypothetically allow to perhaps have several ends. Throughout, we will simply  
 assume  that $g$ satisfies the Chru\'{s}ciel 
  fall-off hypothesis, and in this section we will actually only make use  of hypothesis \eqref{zwei}  with $n=4$. 
  Thus, on any given end $M_{\infty,i}$ of $M$, we assume that there are asymptotic coordinates $(x^1, \ldots, x^4)$
on the universal cover $\widetilde{M}_{\infty,i}$ of $M_{\infty,i}$  in which the components of the metric  satisfy 
$$
g_{jk} = \delta_{jk} + O (|x|^{-1-\varepsilon}), \qquad g_{jk,\ell} = 
O (|x|^{-2-\varepsilon})
$$ 
for some $\varepsilon > 0$,
and such that  the fundamental group $\varGamma_i$ of the end acts by rotations of the  coordinates $(x^1, \ldots, x^4)$ in a manner that preserves both the background-model Euclidean metric
$\delta$ and  the given K\"ahler metric $g$. 

Because $g$ is K\"ahler by assumption, the associated complex structure $J$ satisfies $\nabla J=0$, where $\nabla$ is the Levi-Civita connection of $g$.  However, 
since our fall-off hypothesis implies that $\nabla = \triangledown + O (|x|^{-2-\varepsilon})$, where $\triangledown$ is the flat Levi-Civita connection of $\delta$,  the  elementary argument presented in 
 \cite[\S 2]{heinleb} shows there is  a  $\delta$-compatible  constant-coefficient almost-complex structure $J_0$
on $\RR^4$   such that 
$$J = J_0 + O (|x|^{-1-\varepsilon}), \qquad \triangledown J = 
O (|x|^{-2-\varepsilon}).
$$
After rotating our coordinates $(x^1, \ldots, x^4)$ if necessary, we may moreover arrange for $J_0$ to to become the standard complex structure 
$$
dx^1\otimes \frac{\partial}{\partial x^2}- dx^2\otimes \frac{\partial}{\partial x^1}+dx^3\otimes \frac{\partial}{\partial x^4}-dx^4\otimes \frac{\partial}{\partial x^3}$$
on $\CC^2$. 
Since the action of the fundamental group $\varGamma_i$ preserves both $J$ and $\delta$, it now follows that 
$\varGamma_i \subset \mathbf{U}(2)$. More importantly, we therefore  automatically  obtain  fall-off conditions 
\begin{equation}
\label{symphony}
\omega  = \omega_0 + O (|x|^{-1-\varepsilon}), \qquad \triangledown \omega = 
O (|x|^{-2-\varepsilon}), 
\end{equation}
for the K\"ahler form $\omega= g(J\cdot, \cdot)$ of $g$, where $\omega_0= dx^1\wedge dx^2+ dx^3\wedge dx^4$ is the standard
symplectic form on $\RR^4 = \CC^2$. 

\begin{prop}
\label{symple}
 Let $(M^4, g, J)$ be an ALE K\"ahler surface, let $M_{\infty,i}$ be an  end of $M$,  let $\widetilde{M}_{\infty,i}$ be the universal cover of $M_{\infty,i}$, 
and let $$(x^1,\ldots , x^4): \widetilde{M}_{\infty,i}\to \RR^4 -B$$ be a diffeomorphism, where $B\subset  \RR^4$ is a standard closed ball of some large radius centered  at the origin. 
 Suppose, moreover,  that these asymptotic coordinates have  been chosen  in accordance with the above discussion, so that 
 the K\"ahler form $\omega$ of $(M,g, J)$  is $C^2$ and  satisfies the fall-off conditions \eqref{symphony}  in this coordinate system, while the 
action of $\pi_1 (M_{\infty,i})$ on $\widetilde{M}_{\infty,i}$ by 
deck transformations is represented in these coordinates by   the action of a finite group $\varGamma_i\subset \mathbf{U}(2)$ of unitary transformations, acting on $\RR^4=\CC^2$
 in the usual 
way. Then there is $\varGamma_i$-equivariant $C^2$-diffeomorphism 
$\Phi: \RR^4- {C} \to \RR^4 - {D}$, where ${C}\subset \RR^4$ is 
a  standard closed ball  centered at the origin, where ${D}\subset \RR^4$ is  a  smooth $4$-ball whose  boundary  $\partial D$ is a  $\varGamma_i$-invariant 
differentiable $S^3$, and where $B\subset C\cap D$, 
 such that    
$$\Phi^* \omega = \omega_0,$$ 
with  $|\Phi (x)-x| = O (|x|^{-\varepsilon})$ and $|\Phi_*-I|= (|x|^{-1-\varepsilon})$.
\end{prop}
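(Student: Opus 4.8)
The plan is to prove the Proposition by a Moser-type deformation argument adapted to the ALE end, in which the slow Chru\'{s}ciel fall-off is absorbed not by integrating a primitive radially (the naive approach, which only converges for large $\varepsilon$) but by inverting the exterior derivative fibrewise on the cross-sectional $3$-sphere. Throughout write $\beta := \omega - \omega_0$, so that \eqref{symphony} gives $\beta = O(|x|^{-1-\varepsilon})$, $\triangledown\beta = O(|x|^{-2-\varepsilon})$, and $d\beta = 0$ on $\RR^4 - B$; since $\varGamma_i$ preserves $g,J$ and $\delta, J_0$, both $\omega$ and $\omega_0$, hence $\beta$, are $\varGamma_i$-invariant. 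Granting for the moment a suitable primitive $\alpha$ with $d\alpha = \beta$, I will interpolate by $\omega_t := \omega_0 + t\beta = \omega_0 + t\, d\alpha$, $t\in[0,1]$, define a time-dependent vector field $X_t$ by $\iota_{X_t}\omega_t = -\alpha$, and take $\Phi$ to be the time-one flow of $X_t$; the whole argument then rests on producing $\alpha$ with the sharp decay $\alpha = O(|x|^{-\varepsilon})$, $d\alpha = O(|x|^{-1-\varepsilon})$.

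The crux is this primitive. Because the end retracts onto $S^3$ (or, downstairs, onto $S^3/\varGamma_i$), and because $S^3$ has vanishing first and second Betti numbers, the closed form $\beta$ is certainly exact; the entire difficulty is the decay rate, and here is where the argument departs from \cite{heinleb}. I pass to cylindrical coordinates $t = \log|x| \in (t_0,\infty)$, $\theta\in S^3$, and split $\beta = dt\wedge a(t) + b(t)$ into forms on the fibre $S^3$, closedness becoming $d_{S^3}b = 0$ and $\partial_t b = d_{S^3}a$. Using the Hodge Green operator $G$ on $S^3$ and the vanishing of $H^2(S^3)$, I set $h = d_{S^3}^{*}G\,b$, a coexact primitive of $b$ depending smoothly on $t$ and controlled pointwise by $b$; the identity $d_{S^3}(\partial_t h - a) = 0$ together with $H^1(S^3) = 0$ then lets me solve $\partial_t h - d_{S^3}f = a$ for a function $f(t)$ on $S^3$, again controlled pointwise by $a$ and $b$, so that $\alpha := f\,dt + h$ satisfies $d\alpha = \beta$. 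The decisive point is that, since $d_{S^3}$ is invertible on the relevant (co)exact forms, \emph{no} integration in $t$ is needed; thus $\alpha$ inherits the decay of $\beta$ improved by the single power of $|x|$ coming from $dt = d|x|/|x|$, giving $\alpha = O(|x|^{-\varepsilon})$ for \emph{every} $\varepsilon > 0$, with no spurious logarithms or growth. Averaging $\alpha$ over the finite group, i.e. replacing it by $|\varGamma_i|^{-1}\sum_{\gamma\in\varGamma_i}\gamma^{*}\alpha$, preserves both $d\alpha = \beta$ and the decay and renders $\alpha$ $\varGamma_i$-invariant.

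With $\alpha$ in hand the remaining steps are routine. Each $\omega_t$ is closed, $\varGamma_i$-invariant, and, as $\beta = O(|x|^{-1-\varepsilon})$, nondegenerate for $|x|$ large, uniformly in $t$; since $\omega_t^{-1} = \omega_0^{-1} + O(|x|^{-1-\varepsilon})$ is bounded, the field $X_t$ defined by $\iota_{X_t}\omega_t = -\alpha$ is $\varGamma_i$-equivariant with $X_t = O(|x|^{-\varepsilon})$ and $\triangledown X_t = O(|x|^{-1-\varepsilon})$. The Cartan computation $\frac{d}{dt}\phi_t^{*}\omega_t = \phi_t^{*}(\mathcal{L}_{X_t}\omega_t + \beta) = \phi_t^{*}(d\iota_{X_t}\omega_t + \beta) = \phi_t^{*}(-d\alpha + \beta) = 0$, valid because $d\omega_t = 0$, shows the time-one flow $\Phi := \phi_1$ satisfies $\Phi^{*}\omega = \Phi^{*}\omega_1 = \omega_0$. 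Because $X_t$ is small and Lipschitz for $|x|$ large, its flow exists on $\{|x|\geq R\}$ for all $t\in[0,1]$ once $R$ is large, remaining a small $C^2$ perturbation of the identity (the needed regularity is inherited from $\omega\in C^2$, the primitive gaining a derivative); integrating $\dot\phi_t = X_t\circ\phi_t$ and noting that $|x|$ changes by only a bounded factor along the flow gives $|\Phi(x)-x| = O(|x|^{-\varepsilon})$, while a Gronwall estimate on the linearized flow fed by $\triangledown X_t = O(|x|^{-1-\varepsilon})$ gives $|\Phi_{*} - I| = O(|x|^{-1-\varepsilon})$. Taking $C = \overline{B(0,R)}$, the equivariant, $C^0$-close-to-identity map $\Phi$ carries $\{|x| = R\}$ onto a $\varGamma_i$-invariant differentiable $S^3 = \partial D$ bounding a smooth $4$-ball $D$, and enlarging $R$ secures $B\subset C\cap D$, yielding the asserted $\Phi : \RR^4 - C \to \RR^4 - D$.

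The main obstacle is precisely the primitive construction of the second paragraph. For the Chru\'{s}ciel rate $0 < \varepsilon \leq 1$ the naive radial homotopy operator, with components $\int_1^\infty s\, x^a \beta_{aj}(sx)\, ds$, diverges, and this is exactly the phenomenon that forced the stronger fall-off hypothesis in \cite{heinleb}. Trading the radial integration for fibrewise inversion of $d_{S^3}$---legitimate because $S^3$, and equally $S^3/\varGamma_i$, has vanishing first and second Betti numbers---is what removes the restriction on $\varepsilon$; once the sharply decaying, $\varGamma_i$-invariant primitive is available, the Moser flow and its estimates present no further difficulty.
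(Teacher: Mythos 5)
Your proposal is correct, and its overall architecture --- produce a $\varGamma_i$-invariant primitive $\alpha$ of $\omega-\omega_0$ with the sharp decay $O(|x|^{-\varepsilon})$, and $O(|x|^{-1-\varepsilon})$ on first derivatives, then run the Moser flow of the field $X_t$ defined by $X_t\lrcorner\,\omega_t=-\alpha$ and verify the flow estimates --- is exactly that of the paper. Where you genuinely diverge is in the construction of the primitive. You invert $d$ fibrewise on the cross-section, writing $\beta=dt\wedge a+b$ in cylindrical coordinates and taking $\alpha=-(d^{*}Ga)\,dt+d^{*}Gb$ via the Green operator on $S^3$, so that no integration in $t$ occurs and the decay is read off slice by slice from elliptic estimates. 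The paper instead \emph{does} integrate radially, but from a fixed finite radius $\varrho_0$ outward rather than from infinity inward: it sets $\psi=\int_{\varrho_0}^{\varrho}\varphi_{\zap r}\,d{\zap r}$ with $\varphi_{\zap r}$ the radial contraction of $\omega-\omega_0$. The resulting apparent growth $O(\varrho^{1-\varepsilon})$ as a form on the unit $S^3$ is exactly cancelled by the factor $\varrho^{-1}$ incurred in converting back to Euclidean components, so $\psi=O(|x|^{-\varepsilon})$ after all; the residue $(\omega-\omega_0)-d\psi$ is then a $\varrho$-independent closed $2$-form pulled back from $S^3$, disposed of using $H^2(S^3)=0$. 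So your diagnosis is only half right: it is the radial homotopy \emph{to infinity} that diverges, and the paper's remedy is the elementary finite-base-radius integration rather than your Hodge-theoretic inversion (which also works, at the modest cost of elliptic regularity on $S^3$ and the bookkeeping needed to control $\partial_t h$ and the full covariant derivative of $\alpha$, not merely $d\alpha$). The one point you pass over too quickly is the closing claim that $\Phi(\partial C)$ bounds a smooth $4$-ball: an embedded $S^3$ in $\RR^4$ is not known to bound a smooth ball in general (smooth Schoenflies is open in this dimension), and the paper secures this by multiplying the $X_t$ by a cutoff so that $\Phi$ extends to a $\varGamma_i$-equivariant diffeomorphism of all of $\RR^4$ carrying the round ball onto $D$; it also uses a backward-flow argument to confirm that the image of $\Phi$ is the entire complement of $D$. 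You should add those two sentences, but nothing in your argument fails.
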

\begin{proof}
The following proof is largely  a quantitative  refinement of Moser's stability argument \cite{moser}. 

Let $\mathbf{a}$   denote the radius of the given closed ball  $B\subset \RR^4$, and  notice  that 
we can identify $\RR^4-B$ with $S^3 \times (\mathbf{a}, \infty)$ by means of the  smooth diffeomorphism $x \mapsto (x/|x| , |x|)$. 
Letting  $\varrho =|x| \in (\mathbf{a}, \infty)$ denote the radial coordinate, and letting $\eta =  \frac{\partial}{\partial \varrho}$ denote the 
unit radial vector field in $\RR^4$, we now  define  an ${\zap r}$-dependent  $1$-form $\varphi_{\zap r}$  on $S^3$
by restricting the $1$-form $\eta  \lrcorner \, d\left( \omega - \omega_0 \right)$, which in any case has vanishing radial component,  
to  $S^3\times \{ {\zap r}\} \subset S^3\times (\mathbf{a},\infty )$:
$$\varphi_{\zap r} :=\eta  \lrcorner \, \left( \omega - \omega_0 \right)  \Big|_{\varrho = {\zap r}}, \quad {\zap r}\in (\mathbf{a}, \infty).$$
Because our fall-off conditions guarantee  that $\varphi_{\zap r} = O(r^{-\varepsilon})$ as a $1$-form on $S^3$, 
it follows that, for any  choice of 
$\varrho_0\in (\mathbf{a}, \infty)$,  
$$\psi =  \int_{\varrho_0}^\varrho \varphi_{\zap r}\, d{\zap r} $$
is a  well-defined $\varrho$-dependent $1$-form on $S^3$ of growth $O(\varrho^{1-\varepsilon})$, with  first partial derivatives
 on $S^3$ of similar growth. Viewing $\psi$ as  a $1$-form on  $S^3\times (\mathbf{a}, \infty )$ 
with vanishing component in the $\varrho$-direction,   our assumptions thus not only 
 guarantee it is is a $1$-form of class $C^2$, but also  that its components  
 in $\RR^4$ satisfy  the fall-off conditions 
$$ \psi_k = O(|x|^{-\varepsilon}), \qquad   \psi_{k,\ell} = O(|x|^{-1-\varepsilon}).$$
However, Cartan's magic formula for the Lie derivative tells us that 
$$\mathscr{L}_{\frac{\partial}{\partial \varrho}}\left( \omega - \omega_0 \right) = \eta  \lrcorner \, d\left( \omega - \omega_0 \right) +d
\left[\eta  \lrcorner \left( \omega - \omega_0 \right)\right]= d
\left[\eta  \lrcorner \left( \omega - \omega_0 \right)\right]$$
because $\omega$ and $\omega_0$ are both closed; and since the Lie derivative commutes with $d$ on $C^2$ forms, 
we also have 
$$\mathscr{L}_{\frac{\partial}{\partial \varrho}}d \psi = d [\mathscr{L}_{\frac{\partial}{\partial \varrho}} \psi ] = d\varphi = d
\left[\eta  \lrcorner \left( \omega - \omega_0 \right)\right],$$
too. 
It follows that $\alpha :=\left( \omega - \omega_0 \right)-d\psi$ is a closed, $\varrho$-independent  $2$-form on $S^3 \times (\mathbf{a}, \infty)$.
Moreover, since 
$$\eta \lrcorner \, \alpha = 
\eta \lrcorner \, \left[ \left( \omega - \omega_0 \right) -d\psi\right]= \varphi - \mathscr{L}_\eta \psi = \varphi -\varphi =0,$$
it follows that $\alpha$ is actually the pull-back of a closed $2$-form on $S^3$. But since 
$H^2 (S^3 )=0$, we  therefore have $\alpha = d\beta$ for 
for some $\varrho$-independent $1$-form $\beta$ on $S^3$. 
Moreover, since
$\omega$ and $\omega_0$ are both $\varGamma_i$-invariant, it follows that $\varphi$, $\psi$, and $\alpha$ are all 
$\varGamma_i$-invariant, too;  by averaging, we can therefore arrange for $\beta$ to also be $\varGamma_i$-invariant, while  
still satisfying the equation $\alpha = d\beta$. Setting
 $\theta := \psi + \beta$, we then have    
 $$\omega - \omega_0  = d\theta$$ 
  for a  $\varGamma_i$-invariant $1$-form $\theta$ of class $C^2$  on $\RR^4 - B$ with fall-off
$$\theta = O(|x|^{-\varepsilon}), \qquad   \triangledown \theta = O(|x|^{-1-\varepsilon}).$$

Let us next consider the family of convex combinations
\begin{equation}
\label{combo}
\omega_t = (1-t) \omega_0+ t\omega = \omega_0 + t \left( \omega - \omega_0\right) , \qquad t\in [0,1],
\end{equation}
of the given symplectic  forms $\omega$ and $\omega_0$.  Because $\left( \omega - \omega_0\right)= O (|x|^{-1-\varepsilon})$
as a $2$-form on $\RR^4 -B$,  there is some $\mathbf{b} > \mathbf{a}$ such that 
$|\omega- \omega_0|< 1/\sqrt{2}$ for all $\varrho \geq \mathbf{b}$, where the norm of a $2$-form here  is calculated with respect to 
the Euclidean   metric $\delta$. This then implies that, for any vector $v\in T_x\RR^4= \RR^4$, one  has 
$$|v\lrcorner \, \omega_t| > \frac{1}{2} |v| \qquad \forall t\in [0,1]$$
whenever $\varrho = |x| > \mathbf{b}$; here the vector norm is again measured with respect to the Euclidean metric $\delta$. 
Thus, when $\varrho > \mathbf{b}$ and $t\in [0,1]$, 
 the  maps $T_x\RR^4 \to T^*_x\RR^4$ defined  by the contractions $v\mapsto v\lrcorner\, \omega_t$ are not only invertible, but have
 inverses of operator norm $< 2$ with respect to $\delta$. Defining a $t$-dependent $C^2$ vector field $X_t$ on the exterior region 
 $\varrho \geq \mathbf{b}$ by 
 \begin{equation}
\label{dumb}
X_t  \lrcorner\, \omega_t = -\theta, \qquad t\in [0,1],
\end{equation}
 our fall-off conditions therefore tell us that $|X_t|_\delta= O(\varrho^{-\varepsilon})$ and  $|\triangledown X_t|_\delta = O(\varrho^{-1-\varepsilon})$.
 In particular, it follows that there is some $\mathbf{c} \geq \mathbf{b} + 1$
 such that $|X_t|_\delta < 1$ on the entire region $\varrho \geq  \mathbf{c} -1$, for every $t\in [0,1]$. 
 Also notice that we automatically have
 \begin{equation}
\label{dumber}
 X_t \lrcorner \,\theta = -\omega_t (X_t, X_t) =0,
\end{equation}
 and  that $X_t$ is 
 $\varGamma_i$-invariant, for every $t\in [0,1]$. 
 
 Fixing coordinates $(x^1, \ldots, x^4, t)$ on $\RR^5= \RR^4 \times \RR$, we now consider the closed $2$-form 
 $$\Omega = \omega_0 + d(t\theta)$$
on an open neighborhood of the region $|x| \geq  \mathbf{c} -1$, $0\leq t \leq 1$, 
  where the $t$-independent forms $\omega_0$ and
 $\theta$ are understood to denote the pull-backs of  the corresponding forms on 
$\RR^4$. Since $d\theta = \omega - \omega_0$, we may rewrite this as 
\begin{equation}
\label{jumbo}
\Omega = \omega_t + dt\wedge \theta,
\end{equation}
so that restriction of $\Omega$ to the various  $t=$ constant slices simply yields  the $2$-forms $\omega_t$
of \eqref{combo}. The $C^2$ vector field
$$\xi = \frac{\partial}{\partial t} + X_t$$
on our region of $\RR^5$ therefore satisfies 
$$\xi \lrcorner\, \Omega = \left[ 
\frac{\partial}{\partial t} + X_t
\right] \lrcorner\, 
\left[ 
\omega_t + dt\wedge \theta 
\right] = \theta - \theta =0$$
by dint of  \eqref{dumb}, \eqref{dumber}, and \eqref{jumbo}. Thus 
Cartan's magic formula  now yields 
$$\mathscr{L}_\xi \Omega = \xi \lrcorner \, d \Omega + d \left[ \xi \lrcorner \, \Omega\right] =0.$$
The flow of $\xi$, which simply acts on $t$ by ``time translation,''  therefore locally moves the $2$-form $\omega_t$ on any given time slice to 
the corresponding  $2$-form at a later time. However, because the vector field $X_t$ always has Euclidean length  $|X_t|< 1$ in the region 
$\varrho > \mathbf{c}-1$, the flow-line  of $\xi$ starting at any $(x,0)$ with $|x|\geq \mathbf{c}$ is well-defined for all $t\in [0,1]$, and remains within
$B_1(x) \times [0,1]$. Thus, letting $C$ denote the Euclidean ball $\varrho \leq \mathbf{c}$, there is a family 
$$\Phi_t : \RR^4 - \mathring{C} \to \RR^4, \qquad  t\in [0,1],$$  
of $C^2$ maps given by following the flow of $\xi$ from  $(\RR^4 -\mathring{C})  \times \{ 0\}$ to $\RR^4\times \{ t\}$. These maps are $C^2$ diffeomorphisms
between  $\RR^4 - {C}$ and their images, and satisfy $\Phi_t^* \omega_t= \omega_0$. In particular, $\Phi:= \Phi_1$ provides a symplectomorphism 
between $(\RR^4-C,\omega_0)$ and $({\mathcal U}, \omega)$, for some open set ${\mathcal U}\subset \RR^4$. 
 But since a time-reversed version of our  argument  shows  that backward trajectories of the flow from $\varrho \geq{c}+1$ are also defined 
and  remain 
  in the region $\varrho \geq{c}$ for $t\in [0,1]$, every point in  the region 
$\varrho \geq \mathbf{c}+1$ must belong to the image  $\mathcal{U}$ of $\Phi$. Moreover, 
we can now extend $\Phi$ as a $\varGamma_i$-equivariant
 $C^2$ diffeomorphism $\RR^4\to \RR^4$ by extending the  vector fields $X_t$ to $\RR^4$ while keeping $|X_t|< 1$  by multiplying the fields
defined by \eqref{dumber} by a cut-off function $\phi (\varrho)$ which is $\equiv 1$ for $\varrho > \mathbf{c} -1$ and $\equiv 0$ for $\varrho < \mathbf{c} -1-\epsilon$.
In particular the closed set  $D= \RR^4 -\mathcal{U}$ is actually diffeomorphic to a standard $4$-ball, and its boundary is a $\varGamma_i$-invariant
differentiable $S^3$. 
 Finally, because 
$|X_t|= O(\varrho^{-\varepsilon})$   and  $|\triangledown X_t|_\delta = O(\varrho^{-1-\varepsilon})$. we   have  $|\Phi (x)-x|=O(|x|^{-\varepsilon})$
and $|\Phi_*-I|= (|x|^{-1-\varepsilon})$.
\end{proof} 

\section{Some Useful Symplectic Orbifolds} 

\label{capcom}

If $\varGamma\subset \mathbf{U}(2)$ is a  finite subgroup, the standard action of $\varGamma$ on $\CC^2$  extends to 
$\CP_2 = \CC^2 \sqcup \CP_1$  in an obvious way --- namely,  by letting a $2\times 2$ complex matrix $A$ act on $\CC^3=\CC^2\oplus \CC$ by $A\oplus 1$, and then 
remembering  that $\CP_2 = (\CC^3- 0)/\CC^\times$. Since this construction gives us  an inclusion $\mathbf{U}(2)\hookrightarrow \mathbf{PSU}(3)$, the induced 
action of $\varGamma$ on $\CP_2$  preserves the standard Fubini-Study metric; and since the action also preserves the complex structure 
of $\CP_2$, it also preserves the Fubini-Study K\"ahler form $\omega$,   which we will choose to regard as  a  symplectic form on $\CP_2$. 
 We may therefore   choose to view the quotient 
$(\CP_2, \omega) /\varGamma$ as a symplectic orbifold. 

We  will  henceforth confine our discussion to those $\varGamma$ that {\em act freely on the unit sphere $S^3\subset \CC^2$}. 
Our goal here  will then be to   construct preferred partial desingularizations of every symplectic orbifold  $(\CP_2, \omega) /\varGamma$ that arises in this way.
Of course, if $\varGamma = \{ 1 \}$, then $\CP_2/\varGamma$ is  smooth, so 
there is nothing to do in this regard. We may therefore assume from now on that $\varGamma \neq \{ 1 \}$.
With this assumption,   the origin in $\CC^2\subset \CP_2$ automatically projects  to  a singular point 
$p\in \CP_2/\varGamma$; and since we have assumed that that  $\varGamma$    acts freely on the unit sphere $S^3$, and hence on all of 
$\CC^2-\{ 0\}$, the singular point $p$ is automatically isolated. More specifically, every other singular point  arises from some element of
the ``line at infinity'' $\CP_1\subset \CP_2$.
Our  objective in this section  will  be to  symplectically modify $(\CP_2, \omega) /\varGamma$  in a manner
that leaves the singularity at $p$ unaltered, but eliminates  all  the other singularities. 

In preparation for this, let us first notice that the center $\mathbf{Z}\cong \mathbf{U}(1)$ of $\mathbf{U}(2)$ consists of
scalar multiples of the diagonal matrix, and acts trivially on the $\CP_1$ at infinity.  Moreover,  since $\mathbf{Z}=\mathbf{U}(1)\cong \RR/\ZZ$, the finite group $\mathbf{Z}\cap \varGamma$ must be  cyclic, and thus isomorphic to 
 $\mathbb{Z}_\ell$ for some positive integer $\ell$. Our first step is therefore to consider the quotient $\CP_2/\ZZ_\ell$. Away from the 
 base-point $\hat{p}$ arising from $[0 : 0 :1]\in \CP_2$, this space is topologically non-singular, and can be given a smooth structure such that  $\omega$ 
 descends to it as a symplectic form. This is perhaps  most easily seen via Lerman's theory of symplectic cuts \cite{lerman}; namely, the Fubini-Study symplectic form 
 on $\CP_2$ is obtained  by taking the symplectic  cut at ${\zap H}\leq 1/2$ of $(\CC^2, \omega_0)$ for the Hamiltonian ${\zap H}= (|z_1|^2+|z_2|^2)/2$, which generates
 a free periodic action of period $2\pi$ at and near the boundary. It follows that $\CP_2/\ZZ_\ell$ is simply obtained from $\CC^2/\ZZ_\ell$  by taking the symplectic
 cut at $\widehat{{\zap H}}\leq \ell/2$ for the Hamiltonian $\widehat{{\zap H}} = \ell {\zap H}$, which again generates a free periodic action of period $2\pi$ at and near
 the boundary.\footnote{More generally,  symplectic orbifold singularities of codimension $2$  are always  symplectically invisible. The essential points are that 
 the fixed-point set is automatically  a symplectic submanifold, and  that the area form on  $\CC/\ZZ_\ell$ induced by the 
 standard area form on $\CC$   becomes a constant times 
 the standard area form on $\CC$ if one declares that the complex variable $\zeta= z^{\ell}/|z|^{\ell -1}$ provides an admissible 
chart on the quotient.} If $\ell > 1$, the global quotient  $(\CP_2 , \omega )/\ZZ_\ell$  can thus   be viewed as a  symplectic orbifold $(X_\ell , \omega )$ with exactly one 
single singular point 
$\hat{p}$, corresponding to the origin in $\CC^2$. The symplectic cut construction gives us a symplectic $2$-sphere 
$\Sigma \subset X_\ell$ of self-intersection $+\ell$ that corresponds to the line at infinity $\CP_1\subset \CP_2$,  
and we note in passing   that  $X_\ell -\{ \hat{p} \}$ is actually  diffeomorphic to the $\mathcal{O}(\ell)$  line bundle over $\CP_1$. Since the symplectic condition on 
a submanifold is open, we can also obviously   perturb this
$(+\ell)$-sphere ``at infinity''  so as  to produce an embedded $2$-sphere $\Sigma^\prime$ that  meets $\Sigma$ in only one point, at which $\Sigma$ and $\Sigma^\prime$
are tangent to order $\ell-1$. Moreover, one can do this in such a manner that $\Sigma\cap \Sigma^\prime$ is any chosen point of $\Sigma$, and 
so that   $\Sigma^\prime$ avoids any given  small neighborhood of 
 the singular base-point $\hat{p}$. Indeed, one can even do this explicitly in the present  context, by just taking $\Sigma^\prime$ to be   the image in $\CP_2/\ZZ_\ell$ of a 
 generic complex line in $\CP_2$.
Of course, almost everything said here is also trivially true in the case of $\ell =1$; the only thing that is substantially different about the case of $X_1=\CP_2$ is
that $\hat{p}$ is a non-singular point  when $\ell = 1$.   Whatever the value of $\ell$,  we also automatically have 
\begin{equation}
\label{degree}
\langle c_1 (X_\ell ) , [\Sigma^\prime ]\rangle = \langle c_1 (X_\ell ) , [\Sigma ]\rangle = \chi (\Sigma ) + \Sigma \cdot \Sigma = 2+ \ell \geq 3
\end{equation}
as an immediate consequence of  the adjunction formula. 
 
We now wish to treat the general $\varGamma \subset \mathbf{U}(2)$ that acts freely on $S^3$. We do so by first  noticing that $\CP_2/\varGamma = X_\ell /\check{\varGamma}$, 
where $\check{\varGamma}:= \varGamma/( \mathbf{Z}\cap \varGamma ) = \varGamma/\ZZ_\ell$. Of course, if $\check{\varGamma} = \{ 1\}$, we are already done. 
Otherwise, notice that since  $\varGamma$ acts freely on $S^3$, and hence on $\CC^2-\{ 0\}$, the fact that  $\ZZ_\ell\subset \varGamma$ is central implies that   $\check{\varGamma}$ also acts  freely on $(\CC^2-\{ 0\} ) /\ZZ_\ell$, and hence on $X_\ell - ( \Sigma \sqcup \{ \hat{p}\})$. The singular points of 
$(X_\ell -\{\hat{p}\} /\check{\varGamma}$ therefore all arise from points of $\Sigma \approx S^2$ that are fixed by some non-trivial subgroup of $\check{\varGamma}$. 
However, since $\mathbf{U}(2)/\mathbf{Z} = \mathbf{PSU}(2)\cong \mathbf{SO}(3)$, our group $\check{\varGamma}$  can be thought of as  a finite subgroup of 
$\mathbf{SO}(3)$,  in a way that simultaneously  realizes  the given action of  $\check{\varGamma}$  on $\Sigma$ as the tautological action of 
$\check{\varGamma}\subset \mathbf{SO}(3)$ on $S^2= \mathbf{SO}(3)/\mathbf{SO}(2)$. 
But since the isotropy group $ \subset \mathbf{SO}(3)$ of any point in $S^2$ is  isomorphic to $\mathbf{SO}(2)\cong \RR/\ZZ $, the stabilizer  $\subset \check{\varGamma}$ of any point 
of $\Sigma$ is necessarily  cyclic --- and  of course  is actually trivial for all but a finite number of points! 
 
%
%
%
%
%
%
 
While the above arguments in principle provide all the information we will  need to prove the main result in this section,  it is still worth mentioning the 
 classical fact that  the only possible
finite groups  $\check{\varGamma}\subset \mathbf{SO}(3)$ are the oriented isometry  groups of  a  polygon or regular polyhedron in $\RR^2$ or $\RR^3$, 
and that  this  implies
  that  the 
quotient $\Sigma/\check{\varGamma}$ is  always   a topological $2$-sphere with 
exactly two or three singular points \cite[Chapter 13]{thurston-orb}, which actually arise
 from the orbits of the vertices, edge-centers, and/or face-centers of the corresponding geometric figure.
Here is a  list of the non-trivial possibilities:

\begin{center}
  \begin{tabular}{| l | c | r |}
    \hline
      Group & Figure & Singularities \\ \hline \hline
    \raisebox{.05in}{Cyclic} &\raisebox{-.02in}{\includegraphics[height=.25in]{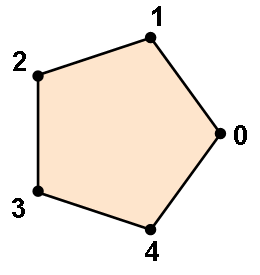}} & \raisebox{.05in}{$\ZZ_n$, $\ZZ_n$} \\ \hline
    \raisebox{.07in}{Dihederal} & \includegraphics[height=.3in]{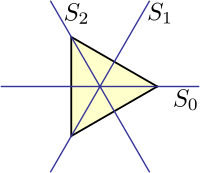} & \raisebox{.07in}{$\ZZ_2$, $\ZZ_2$, $\ZZ_n$} \\ \hline
    \raisebox{.07in}{Tetrahedral} & \includegraphics[height=.3in]{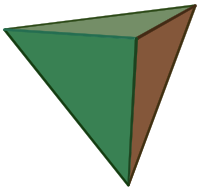} & \raisebox{.07in}{$\ZZ_2$, $\ZZ_3$, $\ZZ_3$}  \\ \hline
     \raisebox{.07in}{Octahedral}& \includegraphics[height=.3in]{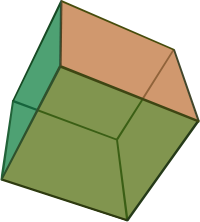} &  \raisebox{.07in}{$\ZZ_2$, $\ZZ_3$, $\ZZ_4$}  \\ \hline
  \raisebox{.07in}{Icosahedral}& \includegraphics[height=.3in]{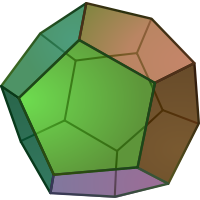} & \raisebox{.07in}{$\ZZ_2$,  $\ZZ_5$,  $\ZZ_5$}  \\
    \hline
  \end{tabular}
\end{center}
Thus, the orbifold  $X_\ell/\check{\varGamma}$ will  actually have exactly $0$, $2$, or $3$ singularities other than the singular   base-point $p$  that is the image of  
$\hat{p}\in X_\ell$.

While this classification does tell us the possible orders of the cyclic groups associated with each singularity, it does not actually 
completely describe the local action of these stabilizers $\ZZ_q\subset \check{\varGamma}$
 on  $X_\ell$, since  the action of $\ZZ_q$  on $\Sigma$ does not determine its action
 on the normal bundle of $\Sigma$.  However, we do know that these stabilizer groups have only isolated fixed points, 
since $\check{\varGamma}$ acts freely on $X_\ell- (\Sigma \sqcup \{ \hat{p}\})$. 
Thus, if $\ZZ_q \subset \check{\varGamma}$ is the stabilizer of some fixed point, its  action is locally
modeled on the action of $\ZZ_q$ on $\CC^2$ generated by 
\begin{equation}
\label{lens}
(z_1, z_2) \mapsto (e^{2\pi i/q}z_1, e^{2\pi i p/q}z_2)
\end{equation}
for a unique integer $p$  with $0< p < q$ and $\mathbf{gcd}(p,q)=1$. 
In complex geometry, there is a standard  minimal resolution of any such singularity, 
obtained by replacing the singular point with a Hirzebruch-Jung string \cite{bpv,hirzebruchjung}, meaning  a finite string 

%
%

\begin{center}
\mbox{
\beginpicture
\setplotarea x from 0 to 260, y from -20 to 80
\put {$-\mathfrak{e}_1$} [B1] at 33 35
\put {$-\mathfrak{e}_2$} [B1] at 70 25
\put {$-\mathfrak{e}_3$} [B1] at 113 35
\put {$-\mathfrak{e}_k$} [B1] at 233 35
\put {$\ldots$} [B1] at 180 30 
{\setlinear
\plot   0 0  80 60  /
\plot   40 60  120 0   /
\plot   80 0  160 60  /
\plot   120 60  160 30   /
\plot   200 30  240  0   /
\plot   200 0  280 60   /
}
\endpicture
}
\end{center}

\noindent 
of copies of $\CP_1$ whose self-intersection numbers $-\mathfrak{e}_j$  are the negatives of the  integers $\mathfrak{e}_j\geq 2$ 
 determined inductively by the algorithm
$$\mathfrak{d}_1= \frac{q}{p}, \quad \mathfrak{e}_j = \left\lceil \mathfrak{d}_{j}\right\rceil , \quad \mathfrak{d}_{j+1}= \left( \mathfrak{e}_j- \mathfrak{d}_{j} \right)^{-1},$$
where the process terminates  at the first $j$ for which $\mathfrak{d}_{j}$ is an integer. 
%
%
%

While it  should be  feasible to carry out  a  symplectic version  of Hirzebruch's construction via a sequence of
symplectic cuts, we will 
instead remove these cyclic singularities by  exploiting \S \ref{logos}. Indeed, for each action \eqref{lens}, Calderbank and Singer \cite{caldsing} 
have constructed a family of ALE scalar-flat Kahler surfaces whose single end is diffeomorphic to  $L(q,p)\times \RR^+$, where $L(q,p) =S^3/\ZZ_q$ is the 
lens space associated with the given action \eqref{lens}. 
The Calderbank-Singer manifolds are, by construction,  diffeomorphic to   Hirzebruch's  minimal resolutions  of $\CC^2/\ZZ_q$, and satisfy 
Chru\'{s}ciel's  fall-off hypotheses with $\varepsilon =1$. For any chosen metric in the family, 
Proposition \ref{symple} thus tells us that the Calderbank-Singer  manifold   contains a compact set whose complement is symplectomorphic to $(\CC^2-\overline{\mathscr{B}_\rad}, \omega_0)/\ZZ_q$,
for the specified   action \eqref{lens} on $\CC^2$, where $\overline{\mathscr{B}_\rad},\subset \CC^2$ is 
is the   standard closed ball  of some radius $\rad> 0$ centered at the origin. By multiplying the Calderbank-Singer metric
(and therefore its K\"ahler form) by a sufficiently small positive constant, we may then arrange
for this statement to actually hold with $\rad$ replaced by 
 any small radius ${\zap r} > 0$ we like. However, each orbifold singularity $y$ we wish to eliminate has a neighborhood modeled on $(\mathscr{B}_{\zap R} , \omega_0)/\ZZ_q$ for some 
radius ${\zap R} > 0$, and for some $\ZZ_q$ action of type \eqref{lens}. 
Choosing our rescaling of the  Calderbank-Singer manifold so that ${\zap r} < {\zap R}$ then allows us to delete a closed neighborhood $(\overline{\mathscr{B}_{\zap r}}, \omega_0)/\ZZ_q$ 
of the singular point $y$, remove the end $(\CC^2-{\mathscr{B}_{\zap R}}, \omega_0)/\ZZ_q$ from the Calderbank-Singer manifold, and 
then glue the  two resulting open manifolds together by identifying the 
two constructed   
copies of the annulus quotient $({\mathscr{B}_{\zap R}}- \overline{\mathscr{B}_{\zap r}},
\omega_0)/\ZZ_q$ via the tautological symplectomorphism between them. 
Since we only need eliminate a finite number of singular points this way, we may also 
take the  radius ${\zap R}$ of these surgery regions  to all be small enough so that  these surgeries
take place in disjoint regions, and so do  not interfere
with each other. Similarly, after choosing some non-singular reference point  $z\in\Sigma/\check{\varGamma}\subset X_\ell/\check{\varGamma}$, 
we also require that the surgery radius ${\zap R}$  be small enough that neither  $z$ nor the singular base-point $p = [\hat{p}]$ belongs to the
closure of 
these surgery regions.  

We will now verify that this construction  proves  the following result:

\begin{prop} 
\label{encapsulate}
Let $\varGamma\subset \mathbf{U}(2)$ be a finite subgroup $\neq \{1\}$ that  acts freely on the unit sphere $S^3\subset \CC^2$. 
Then there is a $4$-dimensional compact connected symplectic orbifold $(X_\varGamma, \omega_\varGamma )$ such that 
\begin{enumerate}[(I)]
\item $(X_\varGamma, \omega_\varGamma )$ contains exactly one singular point $p$; \label{un}
\item $p$ has a neighborhood symplectomorphic to $(\mathscr{B},\omega_0)/\varGamma$ for some standard open ball $\mathscr{B}\subset \CC^2$
centered at the origin, where $\varGamma$ acts on $(\CC^2,\omega_0)$ in the tautological  manner, as a subgroup of $\mathbf{U}(2)$; and \label{deux}
\item there is a symplectic immersion  ${\zap j}: S^2 \looparrowright X_\varGamma-\{p\}$, with at worst transverse positively-oriented   double points, 
 such that $$\int_{S^2} {\zap j}^*[c_1(X_\varGamma-\{p\},J)] \geq 3$$
for some, and hence any,  $\omega$-compatible almost-complex structure $J$.\label{trois} 
\end{enumerate}
\end{prop}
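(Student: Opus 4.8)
The plan is to verify in turn the three properties claimed for the orbifold $(X_\varGamma,\omega_\varGamma)$ produced by the surgery construction just described; the substance lies entirely in (III), while (I) and (II) amount to keeping track of which points the Calderbank--Singer surgeries do and do not disturb.

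For (I) and (II) I would argue as follows. The singular set of $X_\ell/\check{\varGamma}$ consists of the base-point $p=[\hat{p}]$ together with the finitely many images of the $\check{\varGamma}$-fixed points on $\Sigma$, as established above; each surgery excises an orbifold ball around one such fixed-point image and glues in the smooth minimal-resolution core of a rescaled Calderbank--Singer manifold by a symplectomorphism of annuli, thereby deleting exactly one singular point and introducing none. Since these surgeries take place in disjoint regions avoiding a fixed neighborhood of $p$, the space $X_\varGamma$ is a compact connected symplectic orbifold whose only singularity is $p$, which gives (I). For (II), the symplectic-cut description of the Fubini--Study form shows that it coincides with the flat $\omega_0$ on a neighborhood of the origin of $\CC^2$, so a neighborhood of $\hat{p}\in X_\ell$ is symplectomorphic to $(\mathscr{B}',\omega_0)/\ZZ_\ell$; since $X_\ell/\check{\varGamma}=\CP_2/\varGamma$ and $\hat{p}$ is fixed by $\check{\varGamma}$, passing to the further quotient identifies a neighborhood of $p$ with $(\mathscr{B},\omega_0)/\varGamma$ for the tautological action of $\varGamma\subset\mathbf{U}(2)$, a model that survives the surgeries untouched.

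The heart of the matter is (III). I would let $\Sigma'\subset X_\ell$ be the image of a generic complex line in $\CP_2$, chosen so that its single intersection point $z$ with $\Sigma$ is fixed by no nontrivial element of $\check{\varGamma}$ and so that $\Sigma'$ avoids a neighborhood of $\hat{p}$. Then $\Sigma'$ is an embedded symplectic sphere missing every singular point of $X_\ell/\check{\varGamma}$, and by the adjunction computation \eqref{degree} it satisfies $\langle c_1(X_\ell),[\Sigma']\rangle=2+\ell\geq 3$. Because $\check{\varGamma}$ acts freely along $\Sigma'$ (the stabilizer of $z$ being trivial), the quotient projection $\varpi\colon X_\ell\to X_\ell/\check{\varGamma}$ is simultaneously a local biholomorphism and a local symplectomorphism near $\Sigma'$; hence $j:=\varpi|_{\Sigma'}$ is a symplectic immersion of $S^2=\Sigma'$ whose image is a holomorphic curve and so has only positively oriented self-intersections, transverse for a generic choice of line. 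Taking the surgery radius small enough that $\varpi(\Sigma')$ misses all surgery regions, and noting that $\Sigma'$ misses the origin so that $j$ misses $p$, this realizes the required symplectic immersion into $X_\varGamma-\{p\}$.

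It remains to evaluate the Chern number. I would take $J$ near $\varpi(\Sigma')$ to be the complex structure pushed forward from $X_\ell$, which is $\omega_\varGamma$-compatible there because $\omega_\varGamma$ equals the pushed-forward Fubini--Study form away from the surgery regions; then $\varpi^*c_1(X_\varGamma-\{p\},J)=c_1(X_\ell)$ along $\Sigma'$, so $\int_{S^2}j^*c_1=\langle c_1(X_\ell),[\Sigma']\rangle=2+\ell\geq 3$. The clause "for some, and hence any" compatible $J$ is then automatic, since this integral depends only on the homotopy class of the compatible $J$ and the space of such structures is connected. I expect the main obstacle to be the bookkeeping that simultaneously keeps $\varpi(\Sigma')$ clear of every surgery region, so that the Chern number can be read off in the unmodified Fubini--Study part, while still guaranteeing that its self-intersections are at worst transverse, positively oriented double points.
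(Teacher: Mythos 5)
Your proposal is correct and follows essentially the same route as the paper: conditions (I)--(II) are immediate from the construction, and (III) is obtained by projecting a generic projective line (chosen to meet $\Sigma$ at a point of trivial $\check{\varGamma}$-stabilizer and to miss $\hat{p}$ and the surgery regions) to the quotient, with the Chern number $2+\ell\geq 3$ read off from the adjunction computation \eqref{degree}. The only cosmetic difference is that the paper disposes of possible triple points of the projected curve by an explicit local perturbation, whereas you appeal to genericity of the line; either works.
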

\begin{proof} We need only check the last condition, since the first two  are obviously satisfied as long as  the surgery radius ${\zap R}$ is 
small. To produce the immersed sphere promised by condition \eqref{trois}, recall that we can construct an embedded sphere $\Sigma^\prime\subset 
X_\ell-\hat{p}$ of self-intersection $\ell$ that only touches $\Sigma$ at a chosen point of the latter $2$-sphere. 
Let us now take the point $\Sigma\cap \Sigma^\prime$ to be one whose  stabilizer under the action of $\check{\varGamma}$ is trivial, 
so that it projects to a non-singular point  $z\in\Sigma/\check{\varGamma}\subset X_\ell /\check{\varGamma}$.
By shrinking the surgery radius  ${\zap R}$,  we can then guarantee that $z$ lies outside the closure of the surgery regions. Also recall
that one can take $\Sigma^\prime$ to be the image in $X_\ell= \CP_2/\ZZ_\ell$ of a   projective line $\CP_1\subset \CP_2$ that avoids 
 the origin $[0:0:1]$, is not the line at infinity, and passes 
through the point of $\CP_2$ that maps to   $z$. This construction 
in particular guarantees that $\Sigma^\prime- \{z\}$ is a holomorphic curve with respect to a complex structure on $X_\ell - (\Sigma\sqcup \{ \hat{p}\})$
that is invariant under the action on ${\varGamma}\subset \mathbf{U}(2)$. Projecting $\Sigma^\prime$ to  $X_\ell/\check{\varGamma}$ 
therefore gives us an immersed symplectic $2$-sphere whose self-intersections all 
belong to the open set $\mathcal{V}:=[X_\ell - (\Sigma\sqcup \{ \hat{p}\}]/\check{\varGamma}$. It follows that these self-intersections
  are  all transverse and positive, because in this region  our sphere is a totally geodesic holomorphic curve with respect to 
  the metric and $\omega$-compatible complex structure induced of $\mathcal{V}$ by the Fubini-Study metric and complex structure
  on $\CP_2$.  If necessary, we can then smoothly 
perturb this immerse $2$-sphere near any triple points in order to produce a    symplectic  immersion ${\zap j}: S^2 \looparrowright X_\varGamma-\{p\}$ that has 
at worst
transverse, positively-oriented double points. Since the image of ${\zap j}$ is closed and avoids all the the singular points of $X_\ell/\check{\varGamma}$, we can also arrange that 
is disjoint from the surgery regions by shrinking the surgery radius ${\zap R}$ if necessary. Finally, since ${\zap j}^*c_1$ coincides with the restriction of $c_1(X_\ell)$ to 
$\Sigma^\prime$, it follows that   $\int_{S^2}{\zap j}^*c_1=2+\ell \geq 3$ by \eqref{degree}. 
\end{proof}

Since the orbifolds we have just constructed will play an essential role in the next section, it now seems  appropriate to give them a name: 

\begin{defn} 
\label{capsule}
Let $\varGamma\subset \mathbf{U}(2)$ be any finite subgroup  that  acts freely on the unit sphere $S^3\subset \CC^2$. 
Then
\begin{itemize}
\item 
If $\varGamma \neq \{1\}$,  a {\em $\varGamma$-capsule} will mean one of the standard symplectic orbifolds $(X_\varGamma, \omega_\varGamma )$
satisfying conditions \eqref{un}--\eqref{trois}
that  we have 
constructed in this section. The unique singular point $p\in X_\varGamma$ will then be called the {\em base-point} of the $\varGamma$-capsule.
\item When $\varGamma = \{1\}$, we instead define the associated $\varGamma$-capsule $(X_\varGamma, \omega_\varGamma )$ to be $\CP_2$, equipped with its
standard Fubini-Study symplectic structure. In this case, the base-point $p$ of $X_\varGamma$ will simply mean $[0:0:1]\in \CP_2$. 
\end{itemize}
\end{defn}

Thus,   Proposition \ref{encapsulate} can be restated as saying that, whenever 
$ \varGamma\subset \mathbf{U}(2)$
is a finite subgroup
that acts freely on $S^3$,  there always exists 
 a $\varGamma$-capsule. 

\section{Capping Off the Ends}

Now suppose that $(M^4,g,J)$ is an ALE K\"ahler manifold of complex dimension $2$, where the metric is merely assumed to  satisfy the Chru\'{s}ciel fall-off conditions 
 \eqref{eins}-\eqref{zwei} for some $\varepsilon > 0$, in some real coordinate system at each end. This definition does not obviously exclude the possibility that 
$M$ might actually have several ends. However, our first main result is that such a scenario is actually impossible: 

\begin{thm} 
\label{tend}
Let $(M^4,g,J)$ is an ALE K\"ahler surface, where the metric is merely assumed to satisfy the fall-off hypotheses  \eqref{eins}-\eqref{zwei} for some $\varepsilon > 0$.
Then $M$ has exactly one end. 
\end{thm}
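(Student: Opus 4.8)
The plan is to argue by contradiction: I would \emph{assume} that $M$ has $N\geq 2$ ends and use the capsules of Section~\ref{capcom} to assemble from $M$ a single closed symplectic $4$-manifold inside which two disjoint ``positive'' spheres are forced to intersect. To set this up, recall from the discussion preceding Proposition~\ref{symple} that the fundamental group $\varGamma_i$ of each end $M_{\infty,i}$ is a finite subgroup of $\mathbf{U}(2)$ acting freely on $S^3$, and that Proposition~\ref{symple} identifies a neighborhood of infinity in that end, $\varGamma_i$-equivariantly and symplectically, with the standard model $(\CC^2-\overline{\mathscr{B}},\omega_0)/\varGamma_i$. For each $i$ I would choose a $\varGamma_i$-capsule $(X_{\varGamma_i},\omega_{\varGamma_i})$ as in Definition~\ref{capsule}. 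The part of $X_{\varGamma_i}$ lying at large radius in its standard chart --- the complement of a small ball about the base-point $p_i$ --- is a \emph{smooth} symplectic manifold carrying $\omega_0$ near its boundary lens space $S^3/\varGamma_i$; so after rescaling $\omega_{\varGamma_i}$ to match radii, it can be glued to the correspondingly truncated end of $M$ along matching standard symplectic collars. Doing this at every end produces a closed, connected, \emph{smooth} symplectic $4$-manifold $(\widehat{M},\widehat{\omega})$, in which the base-points $p_i$ have been discarded, so that no orbifold singularities are created.

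By construction $\widehat{M}$ contains $N$ immersed symplectic spheres $C_1,\dots,C_N$, one in each cap, namely the images of the immersions furnished by Proposition~\ref{encapsulate}. These have three decisive properties. They are \emph{pairwise disjoint}, distinct caps being disjoint regions of $\widehat{M}$, so that $[C_i]\cdot[C_j]=0$ for $i\neq j$. Each has \emph{positive $\widehat{\omega}$-area}, being a symplectic submanifold. And each has \emph{positive self-intersection}: writing $\delta_i\geq 0$ for the number of positive double points of the $i$-th immersion, the symplectic adjunction formula gives $[C_i]^2=\langle c_1,[C_i]\rangle-2+2\delta_i\geq 3-2=1$, the bound $\langle c_1,[C_i]\rangle\geq 3$ being exactly the content of Proposition~\ref{encapsulate}.

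The contradiction is now pure $4$-dimensional symplectic topology. Since $\widehat{M}$ carries the symplectic sphere $C_1$ with $[C_1]^2>0$, McDuff's structure theorem forces $(\widehat{M},\widehat{\omega})$ to be rational or ruled, hence to satisfy $b^+(\widehat{M})=1$. On a closed oriented $4$-manifold with $b^+=1$ the intersection form is Lorentzian, of signature $(1,b^-)$, so the light-cone (reverse Cauchy--Schwarz) inequality applies: two classes of positive square lying in the same component of the positive cone pair strictly positively. As $[C_1]$ and $[C_2]$ both have positive square and positive pairing with $[\widehat{\omega}]$, they lie in the forward cone determined by $[\widehat{\omega}]$, so $[C_1]\cdot[C_2]\geq\sqrt{[C_1]^2\,[C_2]^2}>0$. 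This contradicts $[C_1]\cdot[C_2]=0$. Hence $N\geq 2$ is impossible, and since a noncompact $M$ has at least one end, it has exactly one.

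I expect the real work to lie in two places. The lesser difficulty is the gluing step, where one must check that the standard-at-infinity normal forms on the two sides of each surgery are matched by an honest symplectomorphism producing a globally smooth $\widehat{\omega}$ --- precisely the point at which the quantitative estimates $|\Phi(x)-x|=O(|x|^{-\varepsilon})$ of Proposition~\ref{symple} are used. The main obstacle is the appeal to McDuff's theorem for an \emph{immersed} rather than embedded sphere: one must either invoke the pseudoholomorphic-curve machinery in the form valid for symplectic spheres with positive double points, or first replace $C_1$ by a nearby embedded symplectic sphere of non-negative square before applying the structure theorem. Making this Gromov--McDuff input rigorous --- the existence, and positivity of intersections, of pseudoholomorphic spheres --- is the technical heart of the argument.
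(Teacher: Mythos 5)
Your proposal is correct and follows essentially the same route as the paper: glue a $\varGamma_i$-capsule onto each end via Proposition~\ref{symple}, use the immersed spheres of Proposition~\ref{encapsulate} together with McDuff's theorems to force $b_+=1$, and derive a contradiction from several pairwise-orthogonal classes of positive square. The only cosmetic differences are that the paper smooths the double points into embedded higher-genus surfaces before applying adjunction (equivalent to your $[C_i]^2=\langle c_1,[C_i]\rangle-2+2\delta_i$) and phrases the final contradiction as ``$b_+(N)$ is at least the number of ends'' rather than via the light-cone inequality.
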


\begin{proof}  
For each end $M_{\infty,i}\approx (S^3/\varGamma_i)\times \RR^+$ of $M$, we may first choose  some $\varGamma_i$-capsule $(X_\varGamma, \omega_{\varGamma_i} )$, 
the existence of which is guaranteed by Proposition \ref{encapsulate}. By Proposition \ref{symple}, each end $M_{\infty,i}$ contains an asymptotic region 
symplectomorphic to $(\CC^2 - \overline{\mathscr{B}_{\mathfrak{R}}}, \omega_0)/\varGamma_i$ for some sufficiently large common 
radius $\mathfrak{R}$. On the other hand, the base-point of each 
$\varGamma_i$-capsule has a neighborhood symplectomorphic to $(\mathscr{B}_{\mathfrak{r}}, \omega_0)/\varGamma_i$ for some small common radius $\mathfrak{r}$,
and by shrinking this radius if necessary we can guarantee that this ball-quotient in each $\varGamma_i$-capsule 
does not meet some chosen symplectically  immersed $2$-sphere satisfying \eqref{trois}. 
We now inflate the $\varGamma_i$-capsules by replacing their symplectic forms $\omega_{\varGamma_i}$ by $t^2\omega_{\varGamma_i}$ for some large $t> 0$. In the inflated 
$\varGamma_i$-capsules, the base-point now has a neighborhood symplectomorphic to $(\mathscr{B}_{R}, \omega_0)/\varGamma_i$, where 
$R= t\mathfrak{r}$. Thus, by taking $t$ to be sufficiently large, we may arrange that $R>\mathfrak{R}$. By now removing $\overline{B_{\mathfrak{R}}}/\varGamma_i\ni p$
from each $\varGamma_i$-capsule and   $(\CC^2 - \overline{\mathscr{B}_{R}})/\varGamma_i$ from each $M_{\infty,i}$, we are then left with pieces
that may be glued together symplectically along copies of $({\mathscr{B}_R}-\overline{B_{\mathfrak{R}}})/\varGamma_i$ to produce a compact
symplectic $4$-manifold $(N, \hat{\omega})$. 

Now $(N, \hat{\omega})$ has been constructed so that it contains a symplectically immersed $2$-sphere ${\zap j}_i:S^2 \looparrowright N$  in each capped-off end. 
Moreover, this $2$-sphere has at worst positive transverse double points, and satisfies $\int_{S^2}{\zap j}^*c_1  \geq 3$. If the sphere has any double points at all, a result of McDuff
\cite[Theorem 1.4]{mcmori}  then tells us that $N$ symplectomorphic to a rational complex surface, and so  orientedly diffeomorphic to either  $S^2\times S^2$ 
or $\CP_2\# k \overline{\CP}_2$ for some $k\geq 0$. On the other hand, if the sphere has no double points, it is then an embedded symplectic $2$-sphere
of self-intersection $\geq 3-2=1> 0$, so an earlier result of McDuff  \cite[Corollary 1.6]{mcrules} once again tells us that  $N$ is orientedly diffeomorphic to a rational complex surface.
In particular, it follows that $b_+(M)=1$, meaning that the intersection form $H^2(M, \RR) \times H^2 (M, \RR)\to \RR$ is of type $({+}{-}\cdots{-})$. 

Now each of the immersed spheres ${\zap j}_i(S^2)$ we have constructed can be modified to yield a connected embedded  symplectic surface $\mathscr{S}_i$
by replacing a small neighborhood of each double point with a cylinder  $S^1\times (-\epsilon , \epsilon )$. This process increases the genus, but does not change the 
homology class; moreover, it can be carried out while remaining completely  inside  the truncated $\varGamma_i$-capsule containing ${\zap j}_i(S^2)$. We therefore
have 
$$\langle c_1(N),  [\mathscr{S}_i]\rangle = \int_{S^2} {\zap j}_i^*c_1 \geq 3,$$
and, since $\mathscr{S}_i$ is symplectic and embdedded,   the adjunction formula allows us to rewrite this as 
$$\chi (\mathscr{S}_i) + [\mathscr{S}_i]\cdot [\mathscr{S}_i] \geq 3.$$
But since $\mathscr{S}_i$ certainly has Euler characteristic $\chi (\mathscr{S}_i)\leq 2$, it therefore follows that  
$$[\mathscr{S}_i]\cdot [\mathscr{S}_i] \geq  3-\chi (\mathscr{S}_i)   \geq  3-2= 1,$$ 
so  each of these surfaces has positive homological self-intersection. However, notice that  $\mathscr{S}_i\cap \mathscr{S}_j= \varnothing$ when $i\neq j$,
since the truncated $\varGamma$-capsules where they live are, by construction, disjoint. This has the homological consequence that  
$$ [\mathscr{S}_i]\cdot [\mathscr{S}_j] =0 \quad \forall i\neq j.$$
It follows that  $b_+(N)$ is at least as large as  the number of ends of $M$. But since we have also just  seen  that $b_+(N)=1$, 
this means that there can be at most one end.   As our definition of an 
ALE manifold moreover requires $M$ to be non-compact, it therefore follows that M has exactly one end.
\end{proof}

\begin{rmk} The regularity of the gluing maps used in the  above construction depends, via Proposition \ref{symple}, on the regularity of the given metric 
$g$. Thus, if $g$ is merely $C^2$, our symplectic manifold $(N, \hat{\omega})$ is ostensibly merely a symplectic manifold with $C^2$ coordinate transformations
between Darboux coordinate charts. This might lead  one to  worry, because many of the cited papers in symplectic topology implicitly assume
   that all objects under discussion are of class 
$C^\infty$. Fortunately, such fears are misplaced, for  general reasons  we  will now explain. Indeed, by a celebrated result of Whitney \cite{whitney},
there exists  a smooth structure on $N$ which is compatible with the given $C^2$ structure, and  the $C^\infty$  $2$-forms, defined relative to this chosen 
smooth structure, will then  be dense among $C^1$ closed forms 
in  the cohomology class  $[\hat{\omega}]$. However, if the smooth form $\tilde{\omega}\in [\hat{\omega}]$ is 
sufficiently close to $\hat{\omega}$ in the $C^1$ topology, all the convex combinations $(1-t)\hat{\omega}+ t\tilde{\omega}\in [\omega ]$, $t\in [0,1]$,  will   be symplectic forms, 
and Moser's stability argument \cite{moser} will then produce a $C^1$ symplectomorphism between the $C^2$ symplectic manifold 
$(N,\hat{\omega})$ and the
smooth symplectic manifold $(N,\tilde{\omega})$. Thus, our use of classification results for smooth symplectic manifolds  is entirely justified. 
\end{rmk}

\begin{cor}
The mass formula of \cite{heinleb}  holds in all complex dimensions $\geq 2$, merely assuming  Chru\'{s}ciel fall-off conditions 
 on the metric. In particular, if  $(M^4,g,J)$ is an ALE K\"ahler surface that merely satisfies  \eqref{eins}-\eqref{zwei} for some $\varepsilon > 0$, then 
 its mass is given by 
 \begin{equation}
\label{massive}
{\zap m}(M,g) =  -{\frac{1}{3\pi}} \langle \clubsuit (c_1) , [\omega ]\rangle+
{\frac{1}{12\pi^2}}\int_M s_g d\mu_g
\end{equation}
  where $s_g$ and $d\mu_g$ are the scalar curvature and metric volume form,  $c_1$ is the first Chern class of $(M,J)$, 
  $\clubsuit$ is the inverse of the natural homomorphism $H^2_c(M)\to H^2(M)$, and 
  $\langle ~, ~\rangle$ is the natural duality pairing 
  between  $H^2(M)$ and $H^2_c(M)$. 
\end{cor}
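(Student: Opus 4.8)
The plan is to obtain this as an immediate consequence of Theorem \ref{tend} together with the results already established in \cite{heinleb}. Recall that \cite[\S 3]{heinleb} proves the general mass formula
$${\zap m}(M,g) = \frac{1}{(2m-1)\pi^{m-1}} \langle \clubsuit (-c_1), [\omega]^{m-1}\rangle + \frac{(m-1)!}{4(2m-1)\pi^m} \int_M s_g\, d\mu_g$$
assuming only the Chru\'{s}ciel fall-off conditions \eqref{eins}-\eqref{zwei}, \emph{provided} one already knows that $M$ has a single end. In complex dimensions $m \geq 3$ the requisite one-endedness was likewise deduced in \cite{heinleb} from \eqref{zwei} alone, so in that range the formula holds verbatim with nothing further to prove. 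The sole remaining gap was the surface case $m=2$, where \cite{heinleb} could establish one-endedness only under the stronger hypothesis $\varepsilon > 1/2$, or under Bartnik-type fall-off.

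Theorem \ref{tend} closes exactly this gap: it shows that any ALE K\"ahler surface satisfying merely \eqref{eins}-\eqref{zwei}, for some $\varepsilon > 0$, has exactly one end. I would therefore first invoke Theorem \ref{tend} to guarantee one-endedness, and then apply the argument of \cite[\S 3]{heinleb}, which---once one-endedness is granted---yields the displayed mass formula from Chru\'{s}ciel fall-off alone. Specializing to $m=2$ then gives \eqref{massive}: the prefactor $1/[(2m-1)\pi^{m-1}]$ becomes $1/(3\pi)$, the pairing $\langle \clubsuit(-c_1), [\omega]^{m-1}\rangle$ reduces to $\langle \clubsuit(-c_1), [\omega]\rangle = -\langle \clubsuit(c_1), [\omega]\rangle$ by linearity of $\clubsuit$ and of the duality pairing, and the curvature prefactor $(m-1)!/[4(2m-1)\pi^m]$ becomes $1/(12\pi^2)$.

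The one point that genuinely deserves attention---less a serious obstacle than a bookkeeping check---is that the derivation of the mass formula in \cite[\S 3]{heinleb} must be verified to depend only on the curvature integrability \eqref{eins}, the metric decay \eqref{zwei}, and the single-end hypothesis, and never on the stronger Bartnik-type conditions that \cite{heinleb} had invoked in real dimension four \emph{solely} in order to bound the number of ends. Since that computation of the mass is carried out locally at the unique end and relies on nothing beyond curvature integrability and metric decay, this independence indeed holds, and the Corollary follows.
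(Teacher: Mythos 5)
Your proposal is correct and follows essentially the same route as the paper: the paper's own proof likewise observes that the higher-dimensional case was already settled in \cite{heinleb}, and that for $m=2$ one need only substitute Theorem \ref{tend} for the one-endedness statement (\cite[Proposition 4.2]{heinleb}) in the proof of \cite[Theorem 5.1]{heinleb}, since that derivation requires nothing beyond Chru\'{s}ciel fall-off once a single end is guaranteed. Your explicit check of the $m=2$ prefactors and the sign convention for $\clubsuit(c_1)$ is a harmless elaboration of the same argument.
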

\begin{proof} The case of complex dimension $\geq 3$ was already proved in \cite{heinleb}. In the case of complex dimension $2$, 
 the proof of \cite[Theorem 5.1]{heinleb} now proves the claim as long as one replaces the citation   of \cite[Proposition 4.2]{heinleb}  with  a reference to 
 the above Theorem \ref{tend}.
\end{proof}

Because there are other plausible methods available for proving   Theorem \ref{tend}, some might wonder if 
all our work in 
\S \ref{capcom} was worth the effort. Fortunately,  the ideas we have described here have other consequences which provide further justification 
for the current project:

\begin{prop}
If $(M^4,g,J)$ is any ALE K\"ahler surface  with Chru\'{s}ciel metric fall-off, 
then $M$ is diffeomorphic to the complement of a tree of symplectically embedded  $2$-spheres in 
a rational complex surface. 
\end{prop}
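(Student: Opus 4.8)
The plan is to reuse the capped-off manifold $(N,\hat\omega)$ constructed in the proof of Theorem \ref{tend} and to recognize $M$ explicitly as the complement, inside $N$, of the exceptional configuration living in the single $\varGamma$-capsule that was glued on. Recall from that proof that, since $M$ has exactly one end, $N$ is a closed symplectic $4$-manifold obtained by excising the far part $(\CC^2-\overline{\mathscr{B}_R})/\varGamma$ of the unique end of $M$ and gluing in an inflated $\varGamma$-capsule $X_\varGamma$ with a ball-quotient neighborhood of its base-point $p$ deleted, and that by McDuff's structure theorems $(N,\hat\omega)$ is symplectomorphic to a rational complex surface $(R,\omega_R)$. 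Fix such a symplectomorphism $\Psi\colon N\to R$. Since deleting the far part of a product end does not change the diffeomorphism type, $M$ is diffeomorphic to the piece of $N$ contributed by $M$, namely the complement of the capsule piece; so it suffices to recognize the capsule piece as a regular neighborhood of a suitable tree of symplectic spheres.

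First I would describe the relevant tree $\mathscr{T}\subset X_\varGamma$. By construction $X_\varGamma$ is a partial resolution of $\CP_2/\varGamma = X_\ell/\check\varGamma$ in which only the non-base singular points are removed, so its exceptional set consists of the central symplectic $2$-sphere $\Sigma/\check\varGamma$ (the image of the line at infinity) together with, at each of the $2$ or $3$ points of $\Sigma/\check\varGamma$ carrying a nontrivial cyclic stabilizer, a Hirzebruch–Jung chain of symplectic $2$-spheres contributed by the corresponding Calderbank–Singer piece. Each chain is a linear string meeting the central sphere transversally in a single point, and distinct chains are attached at distinct points; hence the dual graph is a central vertex with two or three disjoint legs, i.e.\ a tree, and all of its spheres are symplectically embedded (the central one because $\Sigma$ is symplectic and $\check\varGamma$-invariant, the chain spheres because the Calderbank–Singer resolutions were glued in symplectically). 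In the degenerate cases $\check\varGamma=\{1\}$ or $\varGamma=\{1\}$ the tree reduces to the single sphere $\Sigma$, which is still a (trivial) tree.

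Next I would verify the key topological claim: the capsule with a ball-quotient neighborhood of its base-point removed is a regular neighborhood of $\mathscr{T}$, with boundary the space form $S^3/\varGamma$. This is because $\CP_2/\varGamma$ minus a ball-quotient neighborhood of its origin-singularity deformation retracts $\varGamma$-equivariantly onto the line at infinity $\CP_1/\check\varGamma$; passing to the partial resolution (which, by our choice of surgery radii, is supported away from the base-point neighborhood) replaces $\CP_1/\check\varGamma$ by $\mathscr{T}$ and upgrades this into a deformation retraction of $X_\varGamma-(\mathscr{B}/\varGamma)$ onto $\mathscr{T}$. In particular $X_\varGamma-(\mathscr{B}/\varGamma)$ is an honest regular neighborhood of $\mathscr{T}$, and its boundary is the link $S^3/\varGamma$ of the base-point --- exactly the space form along which it is glued to $M$ in forming $N$.

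Combining these, $N$ is the union of a regular neighborhood of $\mathscr{T}$ with a copy of $M$ along $S^3/\varGamma$, so $M$ is diffeomorphic to $N$ minus the open regular neighborhood of $\mathscr{T}$, hence to $N-\mathscr{T}$. Transporting through $\Psi$ gives $M\cong R-\Psi(\mathscr{T})$, the complement of a tree of symplectically embedded $2$-spheres in the rational complex surface $R$, as desired; the $C^2$-versus-$C^\infty$ discrepancy is handled exactly as in the Remark following Theorem \ref{tend}. The main obstacle is the topological identification of the preceding paragraph --- that deleting the base-point neighborhood from the capsule yields precisely a regular neighborhood of the tree with the correct space-form boundary --- together with checking that symplecticity and embeddedness of the configuration survive the inflation, the symplectic gluing, and the passage through $\Psi$.
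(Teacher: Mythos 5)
Your proposal is correct and follows essentially the same route as the paper: it reuses the decomposition $N = M \cup Y$ from the proof of Theorem \ref{tend}, takes the tree to be $\Sigma/\check{\varGamma}$ with the Hirzebruch--Jung strings attached at its orbifold points, and observes that the complement of this tree in the truncated capsule $Y$ is a collar $(S^3/\varGamma)\times(0,1)$, so that deleting the tree from the rational surface $N$ recovers $M$. Your added detail that $Y$ is a regular neighborhood of the tree is just a mild elaboration of the same key fact the paper invokes.
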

\begin{proof}
By a tree of embedded $2$-spheres, we mean a union of transversely intersecting embedded symplectic $2$-spheres such that the dual graph representing their 
intersection patten is connected and contains no loops. 
The tree we have in mind here is determined by $\varGamma$, and is  specifically the subset of a $\varGamma$-capsule gotten by 
attaching the appropriate      Hirzebruch-Jung string to each orbifold point of 
$\Sigma/\check{\varGamma}\approx S^2$. Since  the proof of Theorem \ref{tend} shows  that $M$ can be diffeomorphically compactified  
into a rational symplectic  manifold $N$ by attaching a truncated $\varGamma$-capsule $Y= X_\varGamma- \overline{\mathscr{B}}/\varGamma$, 
the result follows from the fact that the complement of the obvious tree in $Y$ is  diffeomorphic to $(S^3/\varGamma) \times (0,1)$. 
\end{proof}

Here is another immediate consequence of the same ideas:

\begin{prop}
For any ALE K\"ahler surface $(M^4,g,J)$ with Chru\'{s}ciel metric fall-off, 
the fundamental group of $M$ is finite. 
\end{prop}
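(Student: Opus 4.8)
The plan is to reduce the statement to a question about the single end furnished by Theorem \ref{tend} and then exploit the rational-surface compactification. By Theorem \ref{tend}, $M$ has exactly one end, whose cross-section is $S^3/\varGamma$ for the finite group $\varGamma = \pi_1(M_{\infty,i})\subset \mathbf{U}(2)$; and by the construction in the proof of the preceding proposition, $M$ is diffeomorphic to $N - \mathcal{T}$, where $N$ is a rational (hence simply connected) complex surface and $\mathcal{T}\subset N$ is a tree of symplectically embedded $2$-spheres admitting a tubular neighborhood $Y = \nu(\mathcal{T})$ with $\partial Y \cong S^3/\varGamma$ and $Y - \mathcal{T}\cong (S^3/\varGamma)\times (0,1)$. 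Since $\varGamma$ is finite, it suffices to prove that the inclusion $S^3/\varGamma = \partial Y \hookrightarrow M$ induces a \emph{surjection} $\varGamma = \pi_1(S^3/\varGamma)\twoheadrightarrow \pi_1(M)$; the conclusion then follows at once, because $\pi_1(M)$ is thereby exhibited as a quotient of the finite group $\varGamma$.

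To establish this surjectivity, I would argue that $\pi_1(M) = \pi_1(N-\mathcal{T})$ is generated by the meridians of the components of $\mathcal{T}$. Because $N$ is simply connected, any based loop in $N - \mathcal{T}$ bounds a disk in $N$ that can be put in general position with respect to $\mathcal{T}$; deleting the finitely many transverse intersection points then exhibits the loop as a product of (conjugates of) meridians. Each meridian, being a small normal circle around the smooth part of a component of $\mathcal{T}$, lies in the collar $Y - \mathcal{T}\cong (S^3/\varGamma)\times (0,1)$ and is therefore freely homotopic into $\partial Y = S^3/\varGamma$. Granting that the meridians genuinely \emph{generate} $\pi_1(N-\mathcal{T})$, the image of $\pi_1(\partial Y)\to \pi_1(M)$ is all of $\pi_1(M)$, so $\pi_1(M)$ is a quotient of $\pi_1(\partial Y)=\varGamma$, hence finite.

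The crux — and the step I expect to be the main obstacle — is precisely the upgrade from \emph{normal} generation, which is all that van Kampen yields from $\pi_1(N) = \pi_1(M)\ast_{\varGamma}\pi_1(Y)$ with $\pi_1(Y)=1$, to honest generation by meridians. This is false for general codimension-$2$ configurations (a knotted $2$-sphere in $S^4$ can have infinite complementary $\pi_1$, even though it is normally generated by a single meridian), so the argument must use that the spheres of $\mathcal{T}$ are \emph{symplectically (indeed holomorphically) embedded} in the prescribed normal-crossing pattern inherited from the capsule. This ``unknottedness,'' together with the non-degeneracy of the plumbing intersection matrix (equivalently, that $S^3/\varGamma$ is a rational homology sphere), is what should force the conjugating paths to be absorbable and the complement group to collapse onto the link group $\varGamma$. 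As an independent sanity check one may pass to the universal cover $\widetilde{M}$, whose ends are covers of $(S^3/\varGamma)\times\RR^+$ and so are again Euclidean at infinity; applying the one-end conclusion of Theorem \ref{tend} to finite covers $\hat{M}\to M$ forces every finite-index subgroup $\Lambda \leq \pi_1(M)$ to satisfy $\Lambda\cdot \mathrm{im}(\varGamma) = \pi_1(M)$, thereby bounding the finite quotients of $\pi_1(M)$ by $\mathrm{im}(\varGamma)$. The delicate point is that $\widetilde{M}$ ceases to be ALE once $\pi_1(M)$ is infinite, so this covering route does not by itself close the argument and must ultimately be fed by the same unknottedness input.
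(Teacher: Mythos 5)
Your decomposition is exactly the one the paper uses: $N = M\cup Y$, where $Y$ is a truncated $\varGamma$-capsule that deformation retracts onto the tree of $2$-spheres (hence is simply connected) and $M\cap Y\approx (S^3/\varGamma)\times (0,1)$, so that Seifert--van Kampen yields $1=\pi_1(N)=\pi_1(M)/\langle\langle\, i_*\varGamma\,\rangle\rangle$. The difference is what happens next. The paper passes directly from this to the assertion that $\varGamma\to\pi_1(M)$ is surjective, and hence that $\pi_1(M)$ is finite; you instead observe --- correctly, as a matter of pure group theory --- that van Kampen only delivers \emph{normal} generation of $\pi_1(M)$ by the image of the finite group $\varGamma$, and that an infinite group can be the normal closure of a finite subgroup. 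Your disk-pushing argument is just a geometric re-derivation of the same van Kampen relation (a loop is exhibited as a product of \emph{conjugates} of meridians), so it adds nothing beyond normal generation; and since you explicitly leave the upgrade to honest generation as a hypothesis (``granting that the meridians genuinely generate''), the proposal as written establishes only that $\pi_1(M)$ is normally generated by a finite subgroup --- which gives, say, finiteness of $H_1(M;\mathbb{Z})$, but not of $\pi_1(M)$. That is a genuine gap: the proposal does not prove the statement.

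To be fair, you have put your finger on precisely the step that the paper's proof treats as immediate, since ``the quotient of $\pi_1(M)$ by the image of $\pi_1(M\cap Y)$'' is, read literally, the quotient by the \emph{normal closure} of that image. Note, however, that your covering-space ``sanity check'' comes closer to closing the loop than you give it credit for, and is worth developing rather than discarding: if $H:=i_*\varGamma$ had \emph{finite} index in $\pi_1(M)$, the corresponding finite cover would again be an ALE K\"ahler surface with Chru\'{s}ciel fall-off, with one end for each double coset in $H\backslash\pi_1(M)/H$; Theorem \ref{tend} then forces $\pi_1(M)=H\cdot H=H$, which is finite. What neither you nor this remark rules out is the case where $H$ has infinite index in an infinite $\pi_1(M)$ that happens to be its own normal closure of $H$; some further input (beyond general position and beyond van Kampen) is needed there, and identifying it honestly, as you have done, is not the same as supplying it.
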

\begin{proof}
Once again,  the proof of Theorem \ref{tend} shows that  compactifying $M$ by adding a truncated  $\varGamma$-capsule $Y$ results in a  symplectic
$4$-manifold $N$ that is diffeomorphic to a rational complex surface. In particular, this assertion means  that $N$ is simply connected. However,  we also have
$$N= M\cup Y, \qquad M\cap Y  \approx (S^3/\varGamma ) \times (0,1) \approx M_\infty,$$
where   $Y$ is obtained from a $\varGamma$-capsule $X_\varGamma$   by removing a closed neighborhood 
$\overline{\mathscr{B}_{\zap r}}/\varGamma$ 
of the 
base-point $p$. However, since $Y$ deform retracts to  the tree of $2$-spheres obtained by attaching a     Hirzebruch-Jung string to each orbifold singularity of 
$\Sigma/\check{\varGamma}\approx S^2$, it follows that   $Y$ is    simply connected. The Seifert-van Kampen  theorem therefore tells us that $\pi_1(N)$ is the quotient of
$\pi_1(M)$ by the image of $\pi_1 (M\cap Y) \cong \varGamma$. But since  $\pi_1(N)=\{1\}$, this means  that $\Gamma \to \pi_1(M)$ is surjective.
In particular, 
 $\pi_1(M)$ is necessarily  finite. 
\end{proof}

It is worth emphasizing  that, despite persistent rumors  to the contrary, $M$  really might not be simply connected, even in the Ricci-flat case. 
For pertinent  examples and   classification results, see \cite{isuvaina,epwright}.  

\bigskip 

Of course, the simplest case of the present story is when the manifold in question is {\em asymptotically Euclidean} (AE);
these are the special ALE manifolds for which 
 $\varGamma=\{ 1\}$. It is only in this setting that one can hope to prove a positive mass theorem \cite{lp,symass,syaction,witmass},
 asserting that that non-negative scalar curvature necessarily implies non-negative mass; in the more general ALE 
 setting, such statements are typically  false  \cite{lpa}. But in the AE  setting, one can even sometimes  prove Penrose-type
 inequalities \cite{braymass,huilmpen,penineq}, which offer  lower bound for
 the mass in terms of the areas of suitable minimal submanifolds of the space  in question. In the 
 K\"ahler context,  a sharp lower bound of this type was given by \cite[Theorem E]{heinleb}. However, while our proof of this
 result only required Chru\'{s}ciel  fall-off in complex dimensions $\geq 3$, we needed to assume stronger
 fall-off in hypotheses complex dimension $2$. 
 
 Fortunately, the ideas  developed here  provide a way around this difficulty.

\begin{thm}[Penrose Inequality for K\"ahler Manifolds]
\label{epsilon}
Let $(M^{2m},g,J)$ be an AE K\"ahler manifold,, where  the metric merely satisfies the  Chru\'{s}ciel  fall-off hypotheses \eqref{eins}-\eqref{zwei}
for some $\varepsilon > 0$ in some real asymptotic coordinate system. If the scalar curvature $s$ of $g$ is everywhere non-negative, 
then $(M,J)$ carries a numerically  canonical divisor $D$ that is expressed as a sum $\sum n_jD_j$
of compact complex hypersurfaces with  positive integer coefficients, with the property that
 $\bigcup_j D_j \neq \varnothing$  
whenever  $(M,J)$ is not diffeomorphic to $\RR^{2m}$. 
In  terms of this divisor, the mass of the manifold then satisfies
$${\zap m}(M,g) \geq  \frac{(m-1)!}{(2m-1)\pi^{m-1}} \sum_j  n_j \mbox{Vol}\, (D_j) 
$$
and equality holds if and only if $(M,g,J)$  is scalar-flat K\"ahler. 
\end{thm}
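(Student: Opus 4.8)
The plan is to reduce this statement to the already-established mass formula \eqref{massive} (and its higher-dimensional analogue from \cite{heinleb}) by identifying the right-hand side of that formula with the desired geometric quantity. First I would invoke the mass formula, which now holds in all dimensions $\geq 2$ under Chru\'{s}ciel fall-off thanks to Theorem \ref{tend} and its Corollary, to write
$${\zap m}(M,g) = -\textstyle{\frac{(m-1)!}{(2m-1)\pi^{m-1}}} \langle \clubsuit (c_1) , [\omega ]^{m-1}\rangle+ \textstyle{\frac{(m-1)!}{4(2m-1)\pi^m}} \int_M s_g \, d\mu_g.$$
Since $s \geq 0$ by hypothesis, the scalar-curvature integral is non-negative, so it suffices to show that the cohomological term equals a sum $\sum_j n_j \mbox{Vol}(D_j)$ over the components of a numerically canonical divisor, with the $n_j > 0$. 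The geometric content is therefore to identify $-\langle \clubsuit(c_1), [\omega]^{m-1}\rangle$ with a sum of volumes of complex hypersurfaces representing $-c_1 = c_1(K_M)$.

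Next I would produce the divisor. The idea is that on an AE K\"ahler manifold the canonical bundle $K_M$ is trivialized at infinity by the standard holomorphic volume form on $\CC^m$, so a meromorphic (indeed holomorphic, after the fall-off normalization) section of $K_M$ whose divisor is compactly supported realizes a numerically canonical divisor $D = \sum_j n_j D_j$ with the $D_j$ compact complex hypersurfaces and $n_j$ positive integers. The key step is to show $\clubsuit(c_1)$ is Poincar\'e dual in $H^{2}_c(M)$ to $-[D]$, which follows because the compactly supported Chern class of $K_M^{-1}$ is represented by such a divisor once $K_M$ has a section that is asymptotic to the flat trivialization; this is where the AE hypothesis $\varGamma = \{1\}$ and the standard structure at infinity are used. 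Granting this, the duality pairing gives
$$-\langle \clubsuit(c_1), [\omega]^{m-1}\rangle = \langle \text{PD}[D], [\omega]^{m-1}\rangle = \sum_j n_j \int_{D_j} \omega^{m-1} = (m-1)! \sum_j n_j \mbox{Vol}(D_j),$$
using Wirtinger's theorem to convert the integral of $\omega^{m-1}/(m-1)!$ over the complex hypersurface $D_j$ into its Riemannian volume. Combining this with the non-negativity of $\int_M s_g\, d\mu_g$ yields the stated inequality.

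For the equality case, I would observe that equality forces $\int_M s_g\, d\mu_g = 0$; since $s \geq 0$ everywhere, this gives $s \equiv 0$, so $(M,g,J)$ is scalar-flat K\"ahler. Conversely, if the metric is scalar-flat K\"ahler the integral term vanishes identically and the formula reduces to exactly the volume sum, giving equality. I would also need to address the claim that $\bigcup_j D_j \neq \varnothing$ unless $(M,J) \cong \RR^{2m}$: this amounts to noting that an empty divisor means $K_M$ is trivial with a nowhere-vanishing section flat at infinity, which in the AE K\"ahler setting forces the manifold to be standard $\CC^m$.

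The hard part will be the construction and cohomological identification of the canonical divisor, namely producing a global section of $K_M$ (or a suitable power) whose zero divisor is compactly supported and whose compactly supported Poincar\'e dual is $\clubsuit(c_1)$; one must control the behavior of this section at infinity using only the weak Chru\'{s}ciel fall-off, and ensure the divisor's components are genuine compact complex hypersurfaces with well-defined positive multiplicities. This is precisely the dimension-$2$ analytic subtlety the present paper is designed to circumvent, so I expect the argument to lean on the symplectic compactification of Theorem \ref{tend} --- realizing $M$ as the complement of a configuration of symplectic spheres in a rational surface --- to organize the divisor data, rather than on delicate decay estimates for holomorphic sections.
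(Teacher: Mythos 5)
There is a genuine gap, and it sits exactly where you flag ``the hard part'': the existence of the compact complex hypersurfaces $D_j$. Your plan assumes that $K_M$ admits a (mero)holomorphic section asymptotic to the flat trivialization at infinity, with compactly supported zero divisor, and that its divisor class computes $\clubsuit(-c_1)$. Producing such a section under only Chru\'{s}ciel fall-off is precisely the analytic difficulty in complex dimension $2$ that this paper is written to avoid, and no mechanism for it is supplied; saying you expect to ``lean on the symplectic compactification to organize the divisor data'' does not produce holomorphic curves. The paper's actual route is quite different and is the real content of the theorem: one compactifies $(M,\omega)$ symplectically by gluing in $\CP_2$ minus a ball, applies McDuff's classification to conclude $M\approx \RR^4\#k\overline{\CP}_2$ with $H_2(M,\ZZ)$ generated by disjoint symplectic $(-1)$-spheres $E_i$, checks \emph{cohomologically} (by pairing against the generators, not via a section) that $\clubsuit(-c_1)=\sum_i[E_i]$, and then invokes Taubes' nonvanishing of the Seiberg--Witten invariants of the rational surface $N$ (together with the reflections acting by $[E_i]\mapsto -[E_i]$, and the Li--Liu/Taubes ${\rm SW}\Rightarrow{\rm Gr}$ results) to represent each $[E_i]$ by a pseudo-holomorphic curve $\mathscr{E}_i$ for an interpolated $\omega_0$-compatible almost-complex structure $\hat{J}$. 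A final, delicate step -- a distance-nonincreasing retraction $\Psi$ built from the generalized Euler vector field, combined with the fact that the $\mathscr{E}_i$ are calibrated and hence area-minimizing -- forces these curves into the region where $\hat{J}$ coincides with the original integrable $J$, so that they become genuine holomorphic curves of $(M,g,J)$. None of this machinery appears in your proposal, and without it the divisor $D=\sum n_jD_j$ is simply not constructed.

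The parts of your argument that do not depend on this construction -- the reduction to the mass formula, the Wirtinger computation $\int_{D_j}\omega^{m-1}=(m-1)!\,\Vol(D_j)$, and the equality analysis ($s\geq 0$ and $\int_M s\,d\mu_g=0$ forcing $s\equiv 0$) -- agree with the paper. Two smaller points: your displayed mass formula carries a spurious factor of $(m-1)!$ on the cohomological term (the paper's formula has $\frac{1}{(2m-1)\pi^{m-1}}$ there), which would produce $((m-1)!)^2$ in the final inequality if carried through; and your justification that an empty divisor forces $M\cong\CC^m$ over-claims relative to what is needed and proved -- in the paper the divisor is empty exactly when $k=b_2(M)=0$, which by the McDuff step already gives $M$ diffeomorphic to $\RR^4$, with no appeal to a trivialization of $K_M$.
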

\begin{proof}
Since this was already proved in \cite{heinleb} in complex dimension  $\geq 3$, we may henceforth restrict ourselves to the case where  $(M^4,J)$
is a complex surface. In this case, the proof of Theorem \ref{tend} shows that we can produce a compact symplectic manifold $(N,\omega)$ by removing
a standard symplectic end $(\RR^4 - {\mathscr{B}_{\mathfrak{R}}},\omega_0)$ and replacing it  $\CP_2$ minus a ball, equipped with some multiple of the 
Fubini-Study symplectic form. In this setting, a projective line in $\CP_2$ gives us a symplectic $2$-sphere of self-intersection $+1$ in $(N,\omega)$. 
A result of McDuff \cite[Corollary 1.5]{mcrules} then tells us that $(N,\omega)$ is  symplectomorphic to a blow-up of $\CP_2$, equipped with some
K\"ahler form, in a way that sends the given $2$-sphere to a projective line $\CP_1$ that avoids all the blown-up points. Removing this ``line at infinity,'' 
we thus see that 
 $M$ must be diffeomorphic to $\RR^4 \# k\overline{\CP}_2$, where $k=b_2(M)$,  and $H_2(M, \ZZ)$  is moreover generated by the homology 
 classes of $k$ disjoint symplectic
 $2$-spheres $E_1, \ldots , E_2\subset M$ of self-intersection $-1$. But then, under the natural identification $H^2_c(M)=H_2(M,\RR )$ arising from 
 Poincar\'e-Lefschetz  duality, we then have $\clubsuit (-c_1)= \sum_{i=1}^k[E_i]$, as may be checked by integrating/intersecting both sides against each 
 of the homology generators
 $[E_j]$. Thus, the  mass formula \eqref{massive} tells us in the AE case that 
 $${\zap m}(M,g) =  \frac{1}{3\pi}\sum_{j=1}^k \int_{E_j}\omega + \frac{1}{12\pi^2}\int_M s_g d\mu_g .$$

We now show that each of the homology classes $E_i$ can actually be represented by a finite
 sum of holomorphic  curves $D_j$ in $(M,J)$ with positive integer coefficients. We do this by first
 carrying out our construction of the compact symplectic manifold $(N,\hat{\omega})$ rather more carefully.  First, notice  that
 our metric fall-off condition \eqref{zwei} guarantees that the vector field $\eta := (\varrho \nabla\varrho )/|\nabla \varrho|^2$
 defined in term of the Euclidean radius $\varrho$ and the metric  $g$, satisfies ${\mathcal L}_\eta g = 2g + O(\varrho^{ -1- \varepsilon} )$;
 moreover, the generalized Euler vector field $\eta$ is normal to the spheres $\varrho=$ constant, and its flow just rescales the radial function $\varrho$ by positive constants. 
 For $\mathbf{c}$ is sufficiently large, we may therefore 
 define a  distance-nonincreasing piecewise differentiable map $\Psi : M \to M$ that sends the inner region $\varrho \leq \mathbf{c}$ 
 to itself by the identity, and that sends  the outer region $\varrho \geq \mathbf{c}$ to the boundary sphere $\varrho = \mathbf{c}$ by the backward flow of $\eta$. 
 By choosing $\mathbf{c}$ to be sufficiently large, we can also arrange  that the restriction of $\Psi$ to the region $\varrho \geq 3\mathbf{c}$ actually contracts 
 distances by a factor of at least  $2$. 
 
 We next apply the coordinate transformation $\Phi$ given by Proposition \ref{symple} in order to identify the K\"ahler form $\omega$ on the asymptotic region  of 
 $(M,g,J)$ with the standard symplectic form $\omega_0$ on $\CC^2$. Because the derivative of $\Phi$ satisfies $\Phi_* = I + O (|x|^{-1-\varepsilon})$, the image
 $\Phi_*J_0$ is uniformly as close as we like to $J_0$ in the image of the region $\varrho \geq \mathbf{c}$, provided we again  take $\mathbf{c}$ to be 
 sufficiently large. Our fall-off condition $J = J_0 + O (|x|^{-1-\varepsilon})$ now also  guarantees that $\tilde{J}= \Phi_*J$ is similarly  uniformly close to 
 $J_0$. In particular, we may arrange that $T^{1,0}_{\tilde{J}}\cap T^{0,1}_{J_0}=0$, which then allows us to represent 
 $T^{1,0}_{\tilde{J}}$ by a tensor field $\phi\in \Lambda^{0,1}_{J_0}\otimes T^{1,0}_{J_0}$, and the fact that $\tilde{J}$ and $J_0$ are both 
 $\omega_0$ compatible is then encoded by the statement that $\phi \lrcorner\, \omega_0 \in \Lambda^{0,1}_{J_0}\otimes \Lambda^{0,1}_{J_0}$ is symmetric.
 Since the latter condition is linear in $\phi$, the almost-complex structure $\hat{J}$ corresponding to $f\phi$, will also be $\omega_0$ compatible, where we now
 take $f=f(\varrho)$ to be a smooth, non-increasing cut-off function which is $\equiv 1$ for $\varrho\leq 4\mathbf{c}$ and $\equiv 0$ for $\varrho\geq 5\mathbf{c}$.
 Because this almost-complex structure is still uniformly close to $\tilde{J}$, the corresponding Riemannian metric $\hat{g}=\omega_0(\cdot, \hat{J}\cdot)$
 is uniformly close  to $g$ in the exterior region, and we can therefore arrange that $\Phi^*\hat{g} \geq g/2$ in the region $\varrho \geq 3\mathbf{c}$, while
 nonetheless keeping $\Phi^*\hat{g}= g$ in the region $\varrho \leq 3\mathbf{c}$. Thus, the constructed map $\Psi : M \to M$ is distance non-increasing 
 with respect to $\hat{g}$ as well as with respect to $g$; and it is moreover {\em strictly} distance decreasing  outside of  the region 
where $\varrho \leq \mathbf{c}$ in our original coordinates. 
 
To cap off the end, we next choose a K\"ahler metric   $h$  
 on $\CP_2$  that  is identically Euclidean on the unit ball in $\CC^2\subset \CP_2$. By multiplying $h$ by a 
 large positive constant $\lambda > 25 \mathbf{c}^2$, we then obtain a K\"ahler metric $\lambda h$ on $\CP_2$ which contains an isometric copy of a  Euclidean ball of 
 radius $> 5 \mathbf{c}$.  We then cut a Euclidean ball $\mathcal{B}$ of radius $5 \mathbf{c}$ out of this larger ball, and glue in the region $\mathcal{U}\subset M$ that is given by 
 $\varrho \leq 5 \mathbf{c}$ in our symplectic coordinates.  The resulting symplectic $4$-manifold $(N, \hat{\omega})$  thus comes equipped with 
 an almost-K\"ahler metric $\hat{g}$ which is given by $\lambda h$ on $\CP_2-\mathcal{B}$, by $g$ on the region $\mathcal{V}\subset \mathcal{U}$ corresponding
 $\varrho \leq \mathbf{c}$ in our initial coordinates, and by the constructed interpolation $\hat{g}$ on the transition annulus $\mathcal{U}- \mathcal{V}$.

  However, because $(N,\hat{\omega})$ is a symplectic manifold with $b_+=1$,  a result of Taubes 
  \cite{taubes} therefore tells us that  the perturbed Seiberg-Witten invariant  
 of $N$ is  non-zero for the the spin$^c$ structure $\mathfrak{c}$ determined by $J$ and the chamber containing large  multiples of $-[\hat{\omega} ]$. However,  because 
 $N$ also admits self-diffeomorphisms which act on $H^2(N)$ by $[E_i] \mapsto -[E_i]$ and by the identity on $[E_i]^\perp$, the analogous perturbed Seiberg-Witten invariant is 
 also non-zero for the images of $\mathfrak{c}$ under all these reflections. 
 It therefore follows  \cite{liliu2,taubes3} that 
 each of the classes
 $[E_i]$ is represented by a (possibly singular) $\hat{J}$-holomorphic curve $\mathscr{E}_i$. Moreover, $\mathscr{E}_i$ is the zero locus
  of a section $u$ of a line bundle $\mathscr{L}_i\to M$ with Chern class $c_1(\mathscr{L}_i)=[E_i]$ with the property that $u$  is approximately holomorphic near $\mathscr{E}_i$. 
 
 Now the truncated capsule  region $\CP_2 - \mathcal{B}$ of $N$ is a union of projective lines, and these, by construction, are all $\hat{J}$-holomorphic curves. 
 Since $u$ is approximately holomorphic near $\mathscr{E}_i$, the number of zeroes of  the restriction of $u$ to any such projective line $P$, counted with  the obvious 
 non-negative multiplicities,  is exactly $\int_P c_1(\mathscr{L}_i)$. 
However, $\int_P c_1(\mathscr{L}_i)$ is also exactly  the intersection pairing of $[E_i]$ and $[P]$, which we have known from the outset to be  zero. It follows that  $u$ is everywhere non-zero on every  such projective line $P$, 
so that we always have $\mathscr{E}_i\cap P=\varnothing$. But since  $\CP_2 - \mathcal{B}$ is a union of such  projective lines $P$, this implies  that 
 $\mathscr{E}_i\subset N- (\CP_2 - \mathcal{B}) =\mathcal{U}$. 
 
 This means that $\mathscr{E}_i$ is a pseudo-holomorphic curve in $(\mathcal{U},\hat{J})$, and thus of $(M,\hat{J})$, where we we now recall that 
 our interpolated almost-complex structure  $\hat{J}$ was initially defined in symplectic coordinates on the entire end $M_\infty$. Here it is worth
 pointing out that, while $\mathscr{E}_i$ may very well be singular, the corresponding pseudo-holomorphic curves for generic perturbations
 $J^\prime$ of $\hat{J}$ are embedded $2$-spheres because $[E_i]^2 =-1$ and $c_1\cdot [E_i]=+1$; by Gromov compactness  \cite{gromsym,mcsal},
$\mathscr{E}_i$ can therefore be, at worst, a finite tree of branched minimal $2$-spheres. We now recall  that, since  these $2$-spheres are all calibrated 
submanifolds of the almost-K\"ahler manifold $(M,\hat{g}, \omega)$, each one has least area among surfaces its homology class. 
But we have carefully arranged for the piecewise smooth map $\Psi : M\to M$ to be distance non-increasing with respect to $\hat{g}$, and to even 
be  strictly distance decreasing on $M-\mathcal{V}$; moreover, $\Psi : M\to M$ was also constructed as a deformation retraction of $M$ to $\mathcal{V}$.
It therefore follows that none of the $2$-spheres that make up $\mathscr{E}_i$ cannot meet $M-\mathcal{V}$, because applying $\Psi : M\to M$ to 
such a $2$-sphere would otherwise produce a homotopic $2$-sphere of strictly smaller area. It therefore follows that each spherical piece of $\mathscr{E}_i$,
and hence  the entire  pseudo-holomorphic curve
$\mathscr{E}_i$ itself, must be contained in $\mathcal{V}$, where $\hat{J}$ coincides with the original integrable complex structure $J$ of
$(M,J)$. In other words, each $\mathscr{E}_i$ is actually a holomorphic curve in our original K\"ahler manifold $(M,g,J)$. This means that 
$\int_{E_i} \omega$ is in fact exactly the area of $\mathscr{E}_i$, counted with multiplicities, and our mass formula can therefore be rewritten as
$${\zap m}(M,g) =  \frac{1}{3\pi}\sum_{i} \Vol ({\mathscr{E}_i})  + \frac{1}{12\pi^2}\int_M s_g d\mu_g.$$
If the $D_j$ are the various spherical components of the various $\mathscr{E}_i$, and if $n_j$ is the multiplicity with which a given $D_j$ occurs
in this way, can then rewrite this as
$${\zap m}(M,g) =  \frac{1}{3\pi}\sum_{j} n_j \Vol (D_j)  + \frac{1}{12\pi^2}\int_M s_g d\mu_g.$$
If $s_g \geq 0$, this then gives us  the Penrose-type inequality 
$${\zap m}(M,g) \geq  \frac{1}{3\pi}\sum_{j} n_j \Vol (D_j),$$
where equality iff  $g$ is scalar-flat K\"ahler. 
\end{proof}

There is one respect in which this result remains noticeably weaker than \cite[Theorem E]{heinleb}. Indeed, the earlier argument shows 
that, assuming stronger fall-off conditions, the underlying complex surface of an AE $(M,g,J)$ must be  an iterated blow-up of $\CC^2$. 
What we have essentially shown  here is    that the the weaker fall-off conditions \eqref{eins}-\eqref{zwei}  imply that $(M^4,J)$ 
is  an iterated blow-up of a complex surface diffeomorphic to $\RR^4$. Nonetheless, this is quite good enough for applications like the 
following:

\begin{cor}[Positive Mass Theorem for K\"ahler Manifolds] 
Let $(M^{2m},g,J)$ be an AE K\"ahler manifold,, where  the metric merely satisfies the  Chru\'{s}ciel  fall-off hypotheses \eqref{eins}-\eqref{zwei}
for some $\varepsilon > 0$ in some real asymptotic coordinate system. If $g$ has  scalar curvature $s_g \geq 0$ everywhere, 
then ${\zap m}(M,g) \geq 0$, with equality iff $(M,g)$ is Euclidean space. 
\end{cor}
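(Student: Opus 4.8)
The plan is to derive the Positive Mass Theorem as an immediate corollary of Theorem~\ref{epsilon}. First I would dispose of the higher-dimensional case $m \geq 3$ by citing \cite{heinleb}, so that all attention reduces to the surface case $m=2$. In that dimension, Theorem~\ref{epsilon} has just established that any AE K\"ahler surface $(M^4,g,J)$ with $s_g \geq 0$ admits a numerically canonical divisor $D = \sum_j n_j D_j$, expressed as a sum of compact complex curves with strictly positive integer coefficients $n_j$, together with the lower bound
$$
{\zap m}(M,g) \geq \frac{1}{3\pi}\sum_j n_j \Vol(D_j).
$$
Since each $n_j > 0$ and each $\Vol(D_j) \geq 0$, the right-hand side is manifestly non-negative, so ${\zap m}(M,g) \geq 0$ follows without further work. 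This is the core of the argument: positivity of mass is simply the statement that a sum of areas with positive weights is non-negative.

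The substantive part is the rigidity (equality) statement. Here I would argue in two directions. If $(M,g)$ is Euclidean $\RR^4$, then $s_g \equiv 0$ and there is no divisor to speak of, so ${\zap m} = 0$ trivially. For the converse, suppose ${\zap m}(M,g) = 0$. Since both terms in the mass formula are non-negative, each must vanish separately: the scalar curvature integral forces $\int_M s_g \, d\mu_g = 0$, and combined with $s_g \geq 0$ this gives $s_g \equiv 0$, so $g$ is scalar-flat K\"ahler; meanwhile $\sum_j n_j \Vol(D_j) = 0$ with all $n_j > 0$ forces every $\Vol(D_j) = 0$, i.e. the divisor is empty. By the final clause of Theorem~\ref{epsilon}, an empty divisor $\bigcup_j D_j = \varnothing$ can only occur when $(M,J)$ is diffeomorphic to $\RR^4$. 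Thus $M$ is diffeomorphic to $\RR^4$ and carries a scalar-flat K\"ahler metric with vanishing mass.

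The main obstacle is upgrading ``scalar-flat K\"ahler on $\RR^4$ with zero mass'' to ``flat''. For this I would invoke the established positive mass theorem for AE manifolds that are spin (or in particular for $\RR^4$, which is certainly spin): a scalar-flat AE metric on $\RR^4$ with zero mass must be isometric to the flat Euclidean metric. Alternatively, one can argue within the K\"ahler framework: on $\RR^4$ the canonical bundle is trivial, so scalar-flat K\"ahler together with $c_1 = 0$ forces the metric to be Ricci-flat K\"ahler, hence hyperk\"ahler; an AE hyperk\"ahler $4$-manifold with zero mass and trivial topology is then flat by the classification of such gravitational instantons, or again directly by the rigidity case of the classical positive mass theorem \cite{witmass,symass}. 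Either route closes the equality case, yielding that ${\zap m}(M,g) = 0$ holds if and only if $(M,g)$ is Euclidean space.

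\bigskip
\noindent\emph{Remark on the proof strategy.} The essential content here is entirely carried by Theorem~\ref{epsilon}; the corollary is a packaging statement that extracts non-negativity from the positivity of the coefficients $n_j$ and identifies the boundary case with the vanishing of the divisor. The only external input needed is the rigidity half of the classical positive mass theorem to conclude flatness from scalar-flatness plus vanishing mass on $\RR^4$, a step that is standard once the topology has been pinned down to $\RR^4$.
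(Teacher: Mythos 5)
Your proposal is correct in outline and coincides with the paper's proof up to the point where everything has been reduced to a scalar-flat K\"ahler metric on a manifold diffeomorphic to $\RR^{2m}$: extracting non-negativity of the mass from the Penrose inequality of Theorem~\ref{epsilon}, and using the non-vanishing clause for the divisor to pin down the diffeomorphism type in the equality case, are exactly what the paper does. Where you diverge is the final rigidity step. The paper argues that for a scalar-flat K\"ahler metric the Ricci form is an $L^2$ harmonic $2$-form; since de Rham classes on an ALE manifold have unique harmonic representatives and $M$ is contractible, the Ricci form must vanish, so $g$ is Ricci-flat; and then the Bishop--Gromov volume comparison, applied to a complete Ricci-flat manifold whose asymptotic volume growth is exactly Euclidean, forces the equality case of that comparison, i.e.\ flatness. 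Your route (b) contains the same Ricci-flatness step, but the phrase ``scalar-flat K\"ahler together with $c_1=0$ forces Ricci-flat'' elides the needed justification (harmonicity of the Ricci form plus uniqueness of $L^2$ harmonic representatives on an ALE space); and your proposed endgames --- the rigidity case of the classical spin positive mass theorem, or the classification of ALE hyperk\"ahler gravitational instantons --- both import external machinery whose own fall-off and regularity hypotheses would have to be checked against the weak Chru\'{s}ciel conditions that this paper is specifically trying to work under. The Bishop--Gromov argument buys you independence from all of that: it needs only Ricci-flatness and the Euclidean asymptotic volume ratio, and does not use the vanishing of the mass at all in the last step.
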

\begin{proof} 
By Theorem \ref{epsilon}, we merely need consider the case when $M$ is diffeomorphic to $\RR^{2m}$ and the metric $g$ is scalar-flat K\"ahler. 
However, this implies that the Ricci-form $\rho$ of  $g$ is harmonic, and is an $L^2$ harmonic  form. Since de Rham classes on an ALE manifold
have unique harmonic representatives, this means that $g$ must be Ricci-flat, because we have assumed that
 $M$ is contractible. But  since the asymptotic volume growth of an AE metric is 
exactly Euclidean, the
Bishop-Gromov equality therefore implies that  the exponential map gives  an isometry between any tangent space 
and  $(M,g)$. 
\end{proof}

\vfill

\noindent
{\bf Acknowledgments.} This paper was written during a stay at the \'Ecole Normale Sup\'erieure in Paris,
while the author was   on sabbatical  leave as a Simons Fellow. 
It is a particular pleasure to  thank Olivier Biquard  for   his gracious hospitality in Paris, as well as
for many stimulating and useful conversations. He would also like to thank the faculty of the ENS for  
offering a warm welcome into their idyllic research environment.

  \end{document}